\documentclass[12pt]{article}

\textwidth = 16 cm \textheight = 22 cm
\oddsidemargin = 0 cm \evensidemargin = 0 cm \topmargin = 0 cm
\parskip = 2 mm

\usepackage{amsmath,amssymb,amstext,amsthm} 


\theoremstyle{plain}
\newtheorem{theorem}{Theorem}[section]
\newtheorem{lemma}[theorem]{Lemma}
\newtheorem{proposition}[theorem]{Proposition}
\newtheorem{corollary}[theorem]{Corollary}

\theoremstyle{definition}
\newtheorem*{definition}{Definition}

\usepackage{tikz}
\tikzstyle{graphnode}=[circle, draw, fill=black!30,
                        inner sep=0pt, minimum width=4pt]
\tikzstyle{unmatched}=[graphnode,fill=black!0]
\tikzstyle{matched}=[graphnode,fill=black!100]
\tikzstyle{matching} = [ultra thick]

\title{Cops and Robber Game with a Fast Robber \\ on Interval, Chordal, and Planar Graphs}

\author{Abbas Mehrabian\thanks{\texttt{amehrabi@math.uwaterloo.ca}} \\ 
{\small Department of Combinatorics and Optimization} \\ {\small University of Waterloo}}

\date{}

\begin{document}

\maketitle

\begin{abstract}
We consider a variant of the Cops and Robber game, 
introduced by Fomin, Golovach, Kratochv{\'{\i}}l,
in which the robber has unbounded speed,
i.e.~can take any path from her vertex in her turn,
but she is not allowed to pass through a vertex occupied by a cop.
We study this game on interval graphs, chordal graphs, planar graphs, and hypercube graphs.
Let $c_{\infty}(G)$ denote the number of cops needed to capture the robber in graph $G$
in this variant.
We show that if $G$ is an interval graph, then  $c_{\infty}(G) = O(\sqrt {|V(G)|})$,
and we give a  polynomial-time 3-approximation algorithm for finding $c_{\infty}(G)$ in interval graphs.
We prove that for every $n$
there exists an $n$-vertex chordal graph $G$ with $c_{\infty}(G) = \Omega(n / \log n)$.
Let $tw(G)$ and $\Delta(G)$ denote the treewidth and the maximum degree of $G$, respectively.
We prove that for every $G$,
$tw(G) + 1 \leq (\Delta(G) + 1) c_{\infty}(G)$.
Using this lower bound for $c_{\infty}(G)$, we show two things.
The first is that if $G$ is a planar graph
(or more generally, if $G$ does not have a fixed apex graph as a minor),
then $c_{\infty}(G) = \Theta(tw(G))$.
This immediately leads to an $O(1)$-approximation algorithm for computing $c_{\infty}$ for planar graphs.
The second is that if $G$ is the $m$-hypercube graph, then
there exist constants $\eta_1,\eta_2>0$ such that
$\eta_1 2^m / (m\sqrt m) \leq c_{\infty}(G) \leq \eta_2 2^m / m$.

Keywords: Cops and Robber game, Treewidth, Interval and chordal graphs, Planar graphs
\end{abstract}

\section{Introduction}
\label{chp:introduction}
The game of \emph{Cops and Robber} is a perfect information game,
played in a graph $G$.
The players are a set of cops and a robber.
Initially, the cops are placed at vertices
of their choice in $G$ (where more than one cop can be placed at a vertex).
Then the robber, being
fully aware of the cops' placement, positions herself at one of the vertices of $G$.
Then the cops and
the robber move in alternate rounds, with the cops moving first;
however, players are permitted to
remain stationary in their turn if they wish.
The players use the edges of $G$ to move from vertex to vertex.
The cops win, and the game ends, if eventually a cop moves to the vertex currently occupied
by the robber; otherwise, i.e.~if the robber can elude the cops forever, the robber wins.

This game was defined (for one cop) by Winkler and Nowakowski~\cite{game_definition_1}
and Quilliot~\cite{game_definition_2}, and has been studied extensively.
For a survey of results on this game, see the survey by Hahn~\cite{survey_hahn}.
The famous open question in this area is Meyniel's conjecture, published by Frankl~\cite{large_girth},
which states that for every connected graph on $n$ vertices, $O(\sqrt n)$ cops are sufficient to capture the robber.
The best result so far is that
$$n2^{-\left(1-o(1)\right)\sqrt{\log_2 n}}$$
cops are sufficient to capture the robber.
This was proved independently by Lu and Peng~\cite{lu_peng},
and Scott and Sudakov~\cite{scott_sudakov}.

One interesting fact about the Cops and Robber game is that,
many scholars have studied the game,
and yet it is not really well understood:
although the upper bound  $O(\sqrt n)$
was conjectured in 1987,
no upper bound better than $n^{1-o(1)}$ has been proved since then.
As an another example, no efficient approximation algorithm
for finding the number of cops needed to capture the robber in a given graph has been developed.

One might try to change the rules of the game a little in order to get a more approachable problem,
and/or to understand what property of the original game causes the difficulty.
Several variations of the game has been studied,
by changing the rules slightly,
e.g.~by limiting the visibility of the cops~\cite{regular_visible_robber},
by limiting the visibility of both players~\cite{randomized_local_visibility},
by changing the definition of capturing~\cite{shooting_cop},
or by allowing the players to move only in a certain direction along each edge~\cite{variations}.

The approach chosen 
by Fomin, Golovach, Kratochv\'{\i}l, Nisse, and Suchan~\cite{fast_robber_first_journal} is
to allow the robber move faster than the cops.
Inspired by their work,
in this paper we let the robber take \emph{any path} from her
current position in her turn,
but she is not allowed to pass through a vertex occupied by a cop.
The parameter of interest is the \emph{cop number} of $G$,
which is defined as the minimum number of cops needed to ensure that the cops can win.
We denote the cop number of $G$ by $c_{\infty}(G)$,
in which the $\infty$ at the subscript indicates that the robber has unbounded speed.
A nice fact about this variation is its analogy with
the so-called Helicopter Cops and Robber game
(defined in~\cite{helicopter}, see Section~\ref{chp:treewidth} for the definition).
This is a real-time pursuit-evasion game with a robber of unbounded speed,
for which Seymour and Thomas have shown that the number of cops needed equals the treewidth
of the graph (which is a fairly well understood parameter)
plus one~\cite{helicopter}.
Thus one may hope to get good bounds for the cop number in terms of treewidth by
relating our variant of the Cops and Robber game
and the Helicopter Cops and Robber game,
and this is what we do in Section~\ref{chp:treewidth}.
However, one should not be deceived by this analogy;
the cop number can be arbitrarily smaller than the treewidth:
any graph with small domination number and large treewidth
(e.g.,~a complete graph) is such an example.
Therefore, this paper can also be regarded as an attempt to find connections between
the original Cops and Robber game
and the Helicopter Cops and Robber game
by studying an in-between game.
Nevertheless, treewidth is closely connected with cop number,
and in Section~\ref{chp:treewidth}, which is completely devoted to this connection,
we prove bounds for cop number in terms of treewidth.
In Sections~\ref{chp:planar},~\ref{chp:hypercube}~and~\ref{chp:large},
we will see three applications of these bounds.
Tree and path decompositions arise naturally and are important
when studying the cop number, and
the idea of several proofs in Sections~\ref{chp:interval}~and~\ref{chp:chordal}
is based on them
(although they do not appear explicitly in these sections).

This variant was first studied
by Fomin, Golovach, Kratochv{\'{\i}}l~\cite{fast_robber_first}.
They proved that computing $c_{\infty}(G)$ is an NP-hard problem, even if $G$
is a split graph.
(A \emph{split graph} is a graph whose vertex set can be partitioned into a clique and an independent set.)
This variant was further studied by Frieze, Krivelevich and Loh~\cite{variations},
where the authors' approach is based on expansion.
In~\cite{variations}, it is shown that for each $n$, there exists a connected graph on $n$ vertices with cop number $\Theta(n)$.

Let $G$ be a connected graph with $n$ vertices.
In Section~\ref{chp:interval},
we show that if $G$ is an interval graph, then  $c_{\infty}(G) = O(\sqrt n)$
and provide examples for which this bound is tight.
We also give a  polynomial-time 3-approximation algorithm for finding $c_{\infty}(G)$.
In Section~\ref{chp:chordal}, we prove that for every $n$
there exists a chordal graph $G$ with $c_{\infty}(G) = \Omega(n / \log n)$.
Let $tw(G)$ and $\Delta(G)$ denote the treewidth and the maximum degree of $G$, respectively.
In Section~\ref{chp:treewidth}, we prove that for every $G$,
$$\frac{tw(G) + 1 }{\Delta(G) + 1} \leq c_{\infty}(G) \leq tw(G) + 1,$$
and provide examples for which these bounds are tight.
We will see applications of this result in the three subsequent sections.
In Section~\ref{chp:planar}, we show
that if $G$ is a planar graph
(or more generally, if $G$ does not have a fixed apex graph $H$ as a minor),
then $c_{\infty}(G) = \Theta(tw(G))$.
This immediately leads to an $O(1)$-approximation algorithm for computing the cop number of planar
(in general, apex-minor-free) graphs.
In Section~\ref{chp:hypercube}, we show that if $G$ is the Cartesian product of $m$ copies of $K_k$,
then there exist positive constants $\kappa_1,\kappa_2$ such that
$$\frac{\kappa_1 n}{k m\sqrt m} \leq c_{\infty}(G) \leq \min\left\{ \frac{n}{k},\frac{\kappa_2 n}{\sqrt m}\right\}.$$
Moreover, if $G$ is the $m$-hypercube graph, then
there exist constants $\eta_1,\eta_2>0$ such that
$$\frac{\eta_1 n}{m\sqrt m} \leq c_{\infty}(G) \leq \frac{\eta_2 n}{m}.$$
In Section~\ref{chp:large} we give a short proof for
the fact that for each $n$, there exists a connected graph on $n$ vertices with cop number $\Theta(n)$,
which is proved in~\cite{variations} using other ideas.
We conclude with some open problems in Section~\ref{chp:future}.

\subsection{Preliminaries and notation}
Let $G$ be the graph in which the game is played.
In this paper $G$ is always finite,
and $n$ always denotes the number of vertices of $G$.
We will assume that $G$ is simple, because deleting multiple edges or loops
does not affect the set of possible moves of the players.
We consider only connected graphs, since the cop number of a disconnected graph
obviously equals the sum of the cop numbers for each connected component.
As we are only interested in studying the cop number,
we may assume without loss of generality
that the cops choose vertices of our choice in the beginning,
since they can move to the vertices of their choice later.

For a subset $A$ of vertices, the \emph{neighbourhood} of $A$, written $N(A)$,
is the set of vertices that have a neighbour in $A$,
and the \emph{closed neighbourhood} of $A$, written $\overline{N}(A)$, is the union $A \cup N(A)$.
If $A=\{v\}$ then we may write $N(v)$ and $\overline{N}(v)$ instead of $N(A)$ and $\overline{N}(A)$, respectively.
A \emph{dominating set} is a subset $A$ of vertices with $V(G) = \overline{N}(A)$,
and the \emph{domination number} of $G$ is the minimum size of a dominating set of $G$.
The subgraph induced by $A$ is written $G[A]$, and the subgraph induced by $V(G) - A$ is written $G-A$.

\section{Interval Graphs}
\label{chp:interval}

Graph $G$ is an \emph{interval graph}
if there is a correspondence between its vertices and a set of closed intervals on the real line,
such that two vertices are adjacent in $G$ if and only if their corresponding intervals intersect.
Let $G$ be an interval graph.
Fomin~et~al.~\cite{fast_robber_first} proved that
if the robber has fixed speed, the number of cops needed to capture the robber can be computed in polynomial time.
The complexity of computing $c_{\infty}(G)$ was left open in~\cite{fast_robber_first}.
As a partial answer, in this section we prove that this problem is 3-approximable.
We also prove that $c_{\infty}(G) = O(\sqrt n)$
for all connected interval graphs $G$,
and provide examples for which this bound is tight.

\begin{definition}[$k$-wide]
For a subgraph $H$ of $G$, say $H$ is \emph{$k$-wide} if
\begin{itemize}
\item[(i)] $H$ is $k$-connected, and

\item[(ii)] for any $S \subseteq V(G)$ with $|S|<k$ we have $V(H) \not \subseteq \overline{N}(S)$.
\end{itemize}
\end{definition}

\begin{lemma}
If $G$ has a $k$-wide subgraph $H$ then $c_{\infty}(G) \geq k$.
\label{lower_wide}
\end{lemma}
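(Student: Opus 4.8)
The plan is to show that $k-1$ cops cannot win, which, since $c_{\infty}(G)$ is by definition the least number of cops guaranteeing a win, gives $c_{\infty}(G) \geq k$. The guiding idea is to keep the robber perpetually on a vertex of $H$ that no cop can reach in a single step. Call a vertex $v$ \emph{safe} with respect to a cop set $S$ if $v \in V(H) \setminus \overline{N}(S)$. A cop sitting at some $c \in S$ can move only into $\overline{N}(c) \subseteq \overline{N}(S)$, so a robber on a safe vertex cannot be captured on the cops' next turn. Condition (ii) is precisely what guarantees that whenever the cops occupy a set $S$ with $|S| \leq k-1 < k$, a safe vertex exists for the robber to occupy or to aim for.

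First I would treat the initial placement: the cops occupy some $S_0$ with $|S_0| \leq k-1$, and (ii) furnishes a vertex $v_0 \in V(H) \setminus \overline{N}(S_0)$ on which the robber starts. I then maintain as an invariant that after each robber move she stands on a safe vertex. For the inductive step, suppose the robber occupies a safe vertex $v_i$ with respect to the current set $S_i$, and the cops move to a new set $S_{i+1}$ with $|S_{i+1}| \leq k-1$. Applying (ii) to $S_{i+1}$ yields a target $w \in V(H) \setminus \overline{N}(S_{i+1})$, and it remains to show the robber can travel from $v_i$ to $w$ without passing through a cop.

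This is where condition (i) enters. Since $|S_{i+1} \cap V(H)| \leq k-1 < k$ and $H$ is $k$-connected, the subgraph $H - S_{i+1}$ (that is, $H$ with the at most $k-1$ cop-vertices of $V(H)$ deleted) is still connected. Both $v_i$ and $w$ lie in it: the target $w$ does because it avoids $\overline{N}(S_{i+1}) \supseteq S_{i+1}$, and $v_i$ does because its safety with respect to $S_i$ forbids any cop from stepping onto it, so $v_i \notin S_{i+1}$. Hence there is a $v_i$–$w$ path entirely within $V(H) \setminus S_{i+1}$; as every vertex of this path lies in $V(H)$ and misses $S_{i+1}$, it avoids all cops, and the robber follows it to reinstate the invariant at $v_{i+1} = w$. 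Iterating indefinitely shows $k-1$ cops never capture the robber.

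The step I expect to require the most care is the bookkeeping in the inductive step, specifically the observation that the robber's own vertex $v_i$ is never occupied by a cop after the cops move, which is exactly what places $v_i$ inside the connected graph $H - S_{i+1}$. This is not automatic: it holds only because the safety invariant gives $v_i \notin \overline{N}(S_i)$, so no cop in $S_i$ is equal or adjacent to $v_i$ and none can move onto it. Once this is secured, conditions (i) and (ii) interlock cleanly, with (ii) supplying a safe destination and (i) guaranteeing a cop-free route to it.
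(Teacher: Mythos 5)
Your proof is correct and follows essentially the same strategy as the paper's: keep the robber in $H$, use condition (ii) after each cop move to find a vertex of $H$ outside $\overline{N}(S)$, and use the $k$-connectivity from condition (i) to guarantee a cop-free path within $H$ to that vertex. Your version is in fact slightly more careful than the paper's, since you explicitly verify that the robber's current vertex survives the deletion of the cop set (i.e.~$v_i \notin S_{i+1}$ because $v_i \notin \overline{N}(S_i)$), a point the paper leaves implicit.
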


\begin{proof}
Say a cop \emph{controls} a vertex $u$ if the cop is at $u$ or at an adjacent vertex.
Suppose that there are less than $k$ cops in the game, and they initially start at a subset $S$ of vertices.
By condition (ii), there is a vertex $v \in V(H) \setminus \overline{N}(S)$, i.e.~$v$ is controlled by no cop.
The robber starts at $v$, and will always remain in $H$.
After each move of the cops, the set of vertices occupied by them has size less than $k$.
Hence by condition (ii), there exists a vertex $x$ of $H$ that is not controlled by any of the cops.
By condition (i), $H$ is $k$-connected, so as the robber is currently in $H$,
and the number of cops is less than $k$, there is a cop-free path to $x$.
The robber moves there and will not be captured in the next round.
Since she can elude forever by using this strategy, at least $k$ cops are needed to capture her.
\end{proof}

For the rest of this section, $G$ is a connected interval graph.
Consider a set of closed intervals whose intersection graph is $G$, and denote by $I_v$ the interval corresponding to the vertex $v \in V(G)$.
We may assume without loss of generality that none of the intervals have zero length.
Such a representation can be found in polynomial time (see~\cite{interval_detection} for instance).
Let $x_1 < x_2 < \dots < x_{l+1}$ be the set of distinct endpoints of the intervals,
and let $y_1,y_2,\dots,y_l$ be points satisfying $x_i < y_i < x_{i+1}$ for $1\leq i \leq l$.
Also, define $V_i = \{v \in V(G) : y_i \in I_v\}$ for all $1\leq i \leq l$.
It is clear that
each $G[V_i]$ is a clique for $1\leq i \leq l$
(recall that $G[V_i]$ denotes the subgraph induced by $V_i$).
Furthermore, $l\leq 2n$ and the sets $V_1,\dots,V_l$ cover the vertices of $G$.

We say $A$ is a \emph{cut-set} of $G$ if $G-A$ has more connected components than $G$.

\begin{lemma}
\label{interval_cutsets}
Every minimal cut-set $X$ of $G$ is one of the $V_i$'s.
Moreover, if $X=V_i$ is a cut-set, then for each $u_1 \in V_{i_1} \setminus X$ and $u_2 \in V_{i_2} \setminus X$ satisfying $i_1<i<i_2$,
$u_1$ and $u_2$ lie in different components of $G - X$.
\end{lemma}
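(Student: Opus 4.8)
The plan is to exploit the one-dimensional geometry of the interval representation, the key structural fact being that whenever $H$ is a connected induced subgraph of $G$, the union $\bigcup_{v \in V(H)} I_v$ is a single closed interval of $\mathbb{R}$. I would establish this first: if the union were disconnected it would split into two disjoint nonempty closed pieces, and since no single interval can meet both pieces, the vertices would split into two parts with no edges between them, contradicting connectedness of $H$. A consequence I would record immediately is that distinct components of $G - X$ occupy pairwise-disjoint real intervals (no edges between distinct components means their intervals never intersect), so these intervals are linearly ordered from left to right on the line.

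For the first assertion, let $X$ be a minimal cut-set, so $G - X$ has components $C_1, \dots, C_t$ with $t \geq 2$, and let $J_j$ be the interval covered by $C_j$. I would relabel so that $J_1$ is the leftmost, write $b = \max J_1$ and $a = \min(J_2 \cup \dots \cup J_t)$, so that $b < a$, and then pick a cell representative $p = y_k$ lying inside the gap $(b,a)$. This is possible because $b$ and $a$ are themselves among the endpoints $x_1 < \dots < x_{l+1}$, so some cell $(x_k, x_{k+1})$ sits inside $(b,a)$ and supplies such a $y_k$. Setting $Y = V_k = \{v : y_k \in I_v\}$, the crux is to show $Y \subseteq X$ and that $Y$ is itself a cut-set: every vertex of $C_1$ has its interval entirely left of $p$ and every vertex of $C_2 \cup \dots \cup C_t$ has its interval entirely right of $p$, so no vertex of any $C_j$ contains $p$, which gives $Y \subseteq X$; and $Y$ separates the nonempty set $C_1$ from the nonempty set $C_2 \cup \dots \cup C_t$, so $Y$ is a cut-set. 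Minimality of $X$ then forces $X = Y = V_k$.

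For the second assertion I would argue directly from the point $y_i$. If $u_1 \in V_{i_1} \setminus X$ with $i_1 < i$, then $I_{u_1}$ contains $y_{i_1} < y_i$ but not $y_i$, hence lies entirely to the left of $y_i$; symmetrically $I_{u_2}$ lies entirely to the right of $y_i$. In $G - X$ every surviving vertex misses $y_i$ and therefore sits entirely on one side of $y_i$, and two intervals on opposite sides of $y_i$ are disjoint and so induce no edge. Thus $G - X$ splits into a \emph{left} part and a \emph{right} part with no edges across, and since $u_1$ lies on the left and $u_2$ on the right, they fall in different components.

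The step I expect to be the main obstacle is the minimality argument in the first assertion: one must verify that the clique-cut $Y$ trapped inside $X$ is genuinely a cut-set (both sides nonempty) before invoking minimality, and that the gap point can be realized as an honest $y_k$, so that $Y$ is literally one of the $V_i$ rather than merely an abstract clique separator. The structural lemma that connected pieces correspond to genuine real intervals is exactly what makes both of these points go through cleanly.
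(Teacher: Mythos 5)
Your proof is correct, and it shares the same punchline as the paper's --- exhibit a single cell $V_k$ with $V_k \subseteq X$ that is itself a cut-set, then invoke inclusion-minimality to force $X = V_k$ --- but you reach that cell by a different route. The paper argues by contradiction via an explicit path construction: taking $u_1, u_2$ in different components of $G-X$ with $I_{u_1}=[x_a,x_b]$ and $I_{u_2}=[x_c,x_d]$, it notes that if every cell $V_i$ with $b \leq i \leq c-1$ contained a vertex $v_i \notin X$, then $u_1 v_b v_{b+1} \cdots v_{c-1} u_2$ would be a $(u_1,u_2)$-path in $G-X$ (consecutive representatives are adjacent because any interval containing $y_i$ must also contain $x_{i+1}$), so some $V_i$ in that range lies inside $X$, and $y_i$ is then a ``cut-point'' making $V_i$ a cut-set. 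You instead prove a structural lemma --- the intervals of a connected induced subgraph cover a single closed interval, hence the components of $G-X$ cover pairwise disjoint, linearly ordered intervals --- and pick the cell point $y_k$ in the gap after the leftmost component. Your route costs an extra lemma but buys a cleaner picture: every component lies wholly on one side of $y_k$, so $V_k \subseteq X$ and the nonemptiness of both sides of the separation is immediate, and the adjacency checks that the paper's path construction leaves implicit are absorbed into the lemma. Your argument for the second assertion (surviving intervals avoid $y_i$, hence lie wholly on one side of it, and opposite sides cannot be adjacent) is essentially identical to the paper's, which phrases it as every $(u_1,u_2)$-path containing a vertex whose interval covers $y_i$.
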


\begin{proof}
For an index $1\leq i \leq l$, say point $y_i$ is a \emph{cut-point} if there exists a vertex $v\in V(G)$ with both endpoints of $I_v$ lying strictly on the left of $y_i$, and also a vertex $v'\in V(G)$ with both endpoints of $I_{v'}$ lying strictly on the right of $y_i$.
If $y_i$ is a cut-point then clearly $V_i$ is a cut-set of $G$.

Now, let $X$ be a minimal cut-set of $G$.
Let $u_1,u_2$ be vertices in different components of $G- X$, with $I_{u_1} = [x_a,x_b], I_{u_2}=[x_c,x_d]$, and assume by symmetry that $a < b<c< d$.
For each $i$ with $b\leq i \leq c-1$, $y_i$ is a cut-point.
If for all of the $i$'s in this range, there was a vertex $v_i \in V_i \setminus X$, then $u_1 v_b v_{b+1} \dots v_{c-1} u_2$ would be a $(u_1,u_2)$-path in $G- X$.
As such a path does not exist, there is an $i$ in this range such that $V_i \subseteq X$.
But then $V_i$ is a cut-set of $G$, hence $X=V_i$.

For the second statement, let $X=V_i$ be a cut-set, $u_1 \in V_{i_1} \setminus X$ and $u_2 \in V_{i_2} \setminus X$ such that $i_1<i<i_2$.
Let $I_{u_1} = [x_a,x_b], I_{u_2}=[x_c,x_d]$, and so $x_a< x_b < y_i < x_c< x_d$.
Every $(u_1,u_2)$-path contains a vertex whose corresponding interval contains $y_i$, but all such vertices are in $X$.
Hence there is no $(u_1,u_2)$-path in $G - X$.
\end{proof}

\begin{definition}[$G[a,b{]}$, interval subgraph, $w(G)$]
We write $G[a,b]$ for the subgraph induced by $\bigcup_{a\leq i \leq b} V_i$ (for $1\leq a \leq b \leq l$), and we call each of these an \emph{interval subgraph}.
Let $w(G)$ be the maximum number $M$ such that $G$ has an $M$-wide interval subgraph.
\end{definition}

\begin{lemma}
\label{interval_wG}
It is possible to compute $w(G)$ in polynomial time.
\end{lemma}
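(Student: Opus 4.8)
The plan is to reduce the computation of $w(G)$ to computing, for each interval subgraph, two classical quantities that are tractable, and then to exploit the interval structure to make the harder of the two polynomial.

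First I would reformulate what it means for an interval subgraph $H = G[a,b]$ to be $M$-wide. Condition (i) says exactly that the vertex connectivity of $H$, which I denote $\kappa(H)$, is at least $M$. Condition (ii), taken with parameter $M$, says that no set $S \subseteq V(G)$ with $|S| < M$ satisfies $V(H) \subseteq \overline{N}(S)$; equivalently, every $S \subseteq V(G)$ with $V(H) \subseteq \overline{N}(S)$ has $|S| \geq M$. Writing $\gamma_G(H)$ for the minimum size of a set $S \subseteq V(G)$ that dominates $V(H)$ (that is, with $V(H) \subseteq \overline{N}(S)$), condition (ii) holds for $M$ precisely when $\gamma_G(H) \geq M$. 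Hence $H$ is $M$-wide iff $M \leq \min\{\kappa(H), \gamma_G(H)\}$, so the largest $M$ for which $H$ is $M$-wide equals $\min\{\kappa(H), \gamma_G(H)\}$, and therefore
$$w(G) = \max_{1 \leq a \leq b \leq l} \min\{\kappa(G[a,b]),\, \gamma_G(G[a,b])\}.$$
Since $l \leq 2n$, there are only $O(n^2)$ interval subgraphs, so it suffices to evaluate the inner expression in polynomial time for each fixed pair $(a,b)$.

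Next I would treat the two quantities separately. The connectivity $\kappa(G[a,b])$ of a given graph is computable in polynomial time by standard max-flow based methods, so this part is immediate. The real work is computing $\gamma_G(G[a,b])$, since domination is NP-hard in general. Here I would use the interval representation: adjacency means interval intersection, and $V(G[a,b])$ is exactly the set of vertices whose intervals meet the slab between $y_a$ and $y_b$. Thus dominating $V(G[a,b])$ by vertices of $G$ becomes the geometric problem of choosing the fewest intervals of $G$ so that every \emph{target} interval (one meeting the slab) intersects some chosen interval. I would solve this by a left-to-right greedy: repeatedly take the still-uncovered target with the smallest right endpoint, add to $S$ the interval of $G$ that intersects it and has the largest possible right endpoint, then discard all targets now dominated, and iterate. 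A standard exchange argument shows optimality: if an optimal solution covers this leftmost target by some interval $s'$, then the chosen interval $s^{*}$ has right endpoint at least that of $s'$ and left endpoint at most the target's right endpoint, so $s^{*}$ intersects every target lying to the right that $s'$ intersects; replacing $s'$ by $s^{*}$ preserves feasibility. Each target is its own dominator, so the greedy never stalls, and it clearly runs in polynomial time.

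Finally I would assemble the pieces: compute $\kappa(G[a,b])$ and $\gamma_G(G[a,b])$ for each of the $O(n^2)$ pairs, take the minimum of the two for every pair, and output the maximum over all pairs. The main obstacle is the second quantity $\gamma_G(G[a,b])$; everything hinges on correctly translating ``dominate the vertices meeting the slab, using arbitrary vertices of $G$'' into the interval-intersection covering problem and verifying that the greedy is optimal there. The connectivity computations and the enumeration of interval subgraphs are routine.
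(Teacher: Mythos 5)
Your proposal is correct, and its overall skeleton matches the paper's: enumerate the $O(n^2)$ interval subgraphs, compute for each one its connectivity and the relevant domination quantity, take $\min$ of the two, then $\max$ over all pairs $(a,b)$. Where you genuinely diverge is in the treatment of condition (ii). The paper first reduces the ``dominate $V(G[a,b])$ using arbitrary vertices of $G$'' quantity to the ordinary domination number of the induced subgraph $G[a,b]$, via the claim (stated as ``easy to see'') that any dominating set drawn from all of $G$ can be replaced by one of no greater size inside $G[a,b]$; it then cites the known polynomial-time greedy for dominating sets of interval graphs. You instead keep the external quantity $\gamma_G(G[a,b])$ and compute it directly, recasting it as an interval-covering problem (every interval meeting the slab must intersect a chosen interval of $G$) and proving optimality of the leftmost-uncovered/farthest-reaching greedy by an exchange argument. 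Your route is more self-contained: it needs no appeal to the literature on interval-graph domination, and it sidesteps entirely the external-to-internal reduction, which is the one step the paper leaves unjustified (it does require a short argument: a dominator lying outside the slab can be swapped for one of the slab vertices it dominates, all of which share the point $y_a$ or $y_b$). The paper's route is shorter on the page precisely because it delegates both of these points; yours trades that brevity for a complete correctness proof of the domination subroutine, including the verification --- which you correctly supply --- that $V(G[a,b])$ coincides with the set of intervals meeting the slab.
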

\begin{proof}
Fix an interval subgraph $G[a,b]$.
It is easy to see that there is an $S \subseteq V(G)$ with $V(G[a,b]) \subseteq \overline{N}(S)$
if and only if the domination number of $G[a,b]$ is at most $|S|$,
that is,
if there is a set of $|S|$ vertices of $G$ dominating the vertices of $G[a,b]$,
then there exists such a set inside $G[a,b]$.
Moreover, $G[a,b]$ is an interval graph so its domination number can be found in polynomial time (using a greedy algorithm).
The connectivity of $G[a,b]$ can also be computed in polynomial time (see \cite{finding_connectivity} for example).
Therefore, the largest $M$ such that $G[a,b]$ is $M$-wide can be computed in polynomial time.
Recall that $w(G)$ is the maximum number $M$ such that $G$ has an $M$-wide interval subgraph.
The total number of interval subgraphs is $O(l^2)=O(n^2)$, so $w(G)$ can be computed in polynomial time.
\end{proof}

The following lemma gives an appropriate upper bound for $c_{\infty}(G)$.

\begin{lemma}
We have $c_{\infty}(G) \leq 3 w(G)$.
\label{interval_upperbound}
\end{lemma}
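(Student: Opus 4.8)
The plan is to exploit the definition of $w(G)$ directly and play a left-to-right ``shrinking'' game. Write $k=w(G)$ (note $k\ge 1$, since any single $G[i,i]$ is a nonempty clique and hence $1$-wide) and give the cops a budget of $3k$. I would maintain the invariant that the robber is confined to an interval subgraph $G[a,b]$ by two \emph{walls} — occupied vertex sets, one near index $a$ and one near index $b$, each of size at most $k$ — so that at most $2k$ cops are tied up as walls and at least $k$ are free. The whole strategy rests on the fact that a \emph{fast} robber still cannot traverse a vertex occupied by a cop, so occupying a vertex cut genuinely blocks her. Initially $a=1,b=l$, the walls are empty, and the robber is trivially confined to $G[1,l]=G$. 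The crucial leverage is that $G[a,b]$ is an interval subgraph, so by maximality of $w(G)$ it is \emph{not} $(k+1)$-wide.

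Not being $(k+1)$-wide gives a clean dichotomy. In the first case $G[a,b]$ has a dominating set $D$ with $|D|\le k$ (by the remark in the proof of Lemma~\ref{interval_wG}, dominating $V(G[a,b])$ from inside is no harder than from all of $G$). I would hold the walls fixed — keeping the robber inside $G[a,b]$ — and march the $\ge k$ free cops onto $D$ over as many rounds as needed. Once $D$ is fully occupied, every vertex the robber may legally occupy in $G[a,b]$ is adjacent to some $d'\in D$, so after her next move a cop at $d'$ steps onto her and wins. In the second case $G[a,b]$ is not $(k+1)$-connected; since a connected interval graph on at most $k+1$ vertices (and any clique) has domination number at most $k$, this case really does supply a genuine vertex cut $S$ with $|S|\le k$ splitting $G[a,b]$ into at least two components. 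I march the free cops onto $S$ with the walls held fixed, after which the robber is trapped in a single component $C$ of $G[a,b]-S$.

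The recursion is powered by a small structural observation about interval graphs: the components of $G[a,b]-S$ occupy pairwise-disjoint, contiguous ranges of indices in $\{a,\dots,b\}$. Indeed, two intervals sharing some $y_j$ are adjacent (hence in one component), and the intervals of a single component, being connected, have connected union, so a component meeting $y_{j_1}$ and $y_{j_3}$ meets every $y_{j_2}$ in between. Thus $C$ lies inside a \emph{strictly smaller} interval subgraph $G[a_C,b_C]$, and every vertex of $G[a_C,b_C]$ outside $C$ must belong to $S$ (no other component reaches the index range of $C$). The vertices of $S$ bounding $C$ — together with a retained old wall if $C$ touches an end of $[a,b]$ — form the new walls, still of total size at most $2k$, while at least $k$ cops are freed. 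Since the index range strictly shrinks at each step and a single clique $G[i,i]$ is dominated by one cop, the process must terminate in the dominating (capture) case. Counting, the walls never exceed $2k$ and the at most $k$ free cops handle either the new cut or the dominating set, so $3k=3w(G)$ cops suffice.

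The main obstacle I expect is not the combinatorial skeleton but its \emph{realizability} with speed-$1$ cops, so that part needs the most care. The point to argue cleanly is that only the current walls must remain stationary; all relocation is performed by the free cops, and throughout their multi-round march the robber stays confined by the standing walls, so no instantaneous teleportation is invoked and the new confinement is only claimed once the fresh cut (or dominating set) is \emph{completely} occupied. The two delicate verifications are (a) that \emph{occupying} $S$, rather than merely dominating it, is what stops an unbounded-speed robber, and (b) the contiguity argument of the previous paragraph, which is exactly what keeps each wall within the $k$-budget and prevents the number of guarding cops from blowing up as the region is subdivided.
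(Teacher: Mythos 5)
Your overall skeleton (two walls plus $k$ free cops, the dichotomy from non-$(k{+}1)$-wideness, and a strictly shrinking index range) is the same as the paper's, and your contiguity observation about the components of $G[a,b]-S$ is correct. But there is a genuine gap at the phase transition, precisely at the point you flag as delicate. When the free cops occupy an \emph{arbitrary} cut-set $S$ and the robber sits in a component $C$ of $G[a,b]-S$ with index range $[a_C,b_C]$, you declare the new walls to be the $S$-vertices bounding $C$ together with ``a retained old wall if $C$ touches an end of $[a,b]$,'' and you release the other old wall. Nothing justifies this release. The walls your construction produces are frontier sets $S'\cap N(C')$ of earlier, arbitrary cut-sets, and such vertices have no well-defined ``side'': an old wall vertex can have a long interval reaching into the span of the new component $C$ --- it can even belong to $C$ itself as a vertex of $G[a,b]-S$ --- while also having neighbours outside $G[a,b]$; or it can be an interior vertex of $C$ whose occupation is the only thing separating the robber from a boundary vertex of $C$ that has a neighbour outside $G[a_C,b_C]$. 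Such vertices are exactly what was blocking her escape under the old invariant; once the cop standing on one of them marches away, she can traverse it and leave $G[a_C,b_C]$, so the confinement invariant breaks. To be safe you would have to retain \emph{every} old wall vertex in $\overline{N}(C)$, but nothing bounds that set by $k$: it can be as large as the entire old wall, i.e.\ $2k$, after which the new walls need up to $3k$ cops and the budget of $3k$ (walls plus $k$ free cops) is exceeded.

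The paper closes exactly this hole by taking the cut-set to be a \emph{minimal} one: by Lemma~\ref{interval_cutsets}, a minimal cut-set of an interval graph is a full column $V_i$, and a fully occupied column blocks all passage across the point $y_i$ \emph{unconditionally}, independently of what the other cops do. Hence in the paper the walls are always full columns with unambiguous left/right positions, and releasing the far wall is sound no matter where its vertices' intervals reach. Your proof is repaired by the same one-line change --- replace ``a genuine vertex cut $S$'' by ``a minimal vertex cut $S$, which by Lemma~\ref{interval_cutsets} is a column of $G[a,b]$'' --- at which point the contiguity argument becomes unnecessary and your outline coincides with the paper's proof.
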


\begin{proof}
We just need to give a strategy for $3w(G)$ cops to capture the robber.
Let $M = w(G)$.
There are three teams of cops, each of size $M$.
At the beginning the first team starts at a vertex in $V_1$, the second team starts at a vertex in $V_l$, and the third team starts at an arbitrary vertex.
Suppose that the robber starts at a vertex $r$.
The cops' strategy consists of several (at most $l$) phases, in each of which they reduce the free space of the robber.
The following invariant is true at the start of each phase:
the $j$-th team ($j=1,2$) is in a subset $X_j \subseteq V_{i_j}$ such that they block the robber from escaping $G[i_1,i_2]$.

Note that during this phase, if the robber goes to a vertex in $V_{i_1}\cup V_{i_2}$ then she will be captured immediately by the first or second team (recall that each $G[V_i]$ is a clique).
If $i_2 \leq i_1+1$ then she should move to a vertex in $V_{i_1}\cup V_{i_2}$ and will be captured immediately, so assume that $i_2 > i_1+1$.
Since $G[i_1+1, i_2-1]$ is not $(M+1)$-wide,
either $G[i_1+1, i_2-1]$ has a minimal cut-set $X$ of size at most $M$,
or $G[i_1+1, i_2-1]$ has a dominating set $X$ of size at most $M$.

In the second case, the third team moves to $X$
(while the first and second teams stay still and block the robber from escaping $G[i_1,i_2]$),
and the robber will be captured in the next round.

In the first case, the third team moves to $X$, and suppose that $X = V_{i_3}$ (by Lemma~\ref{interval_cutsets}, $X$ is of this form).
Suppose that the robber moves to $r$ right after the third team has settled in $X$ and $j$ be an index such that $r \in V_j$.
If $j=i_3$ then the third team immediately captures her (since $G[V_{i_3}]$ is a clique), so assume, by symmetry, that $i_1 < j < i_3$.
Now, the first team together with the third team block the robber from escaping the subgraph $G[i_1,i_3]$ (by the second statement in Lemma~\ref{interval_cutsets}).
The second and third team switch roles and this phase finishes.
Note that $i_3-i_1 < i_2-i_1$ so the total number of phases is not larger than $l$.
\end{proof}

\begin{theorem}
\label{thm:intervals_approximation}
There exists a polynomial-time 3-approximation algorithm for computing $c_{\infty}(G)$ when $G$ is an interval graph.
\end{theorem}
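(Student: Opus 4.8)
The plan is to show that the quantity $w(G)$, which by Lemma~\ref{interval_wG} is computable in polynomial time, sandwiches $c_{\infty}(G)$ within a factor of three; the algorithm then simply computes $w(G)$ and reports $3w(G)$.

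First I would establish the lower bound $w(G) \leq c_{\infty}(G)$. By the definition of $w(G)$, the graph $G$ has an interval subgraph that is $w(G)$-wide. Since an interval subgraph is in particular a subgraph of $G$, Lemma~\ref{lower_wide} applies directly and yields $c_{\infty}(G) \geq w(G)$. Combining this with the upper bound $c_{\infty}(G) \leq 3w(G)$ from Lemma~\ref{interval_upperbound}, I obtain the two-sided estimate
$$ w(G) \leq c_{\infty}(G) \leq 3 w(G). $$

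The approximation algorithm is now immediate: compute $w(G)$ in polynomial time via Lemma~\ref{interval_wG}, and output the value $3w(G)$. The displayed inequalities show that $c_{\infty}(G) \leq 3w(G)$ (so the reported value is a valid upper bound, and indeed $3w(G)$ cops suffice by the constructive strategy underlying Lemma~\ref{interval_upperbound}), while $3w(G) \leq 3\,c_{\infty}(G)$ (so the reported value overestimates the optimum by at most a factor of three). Hence the output is a 3-approximation of $c_{\infty}(G)$.

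I do not expect any genuine obstacle at this stage, since all the substantial work has already been carried out in the preceding lemmas---in particular the cop strategy behind the upper bound of Lemma~\ref{interval_upperbound} and the polynomial-time computation of $w(G)$ in Lemma~\ref{interval_wG}. The only point requiring a moment of care is the direction of the approximation guarantee: one should report $3w(G)$ rather than $w(G)$, so that the returned value is a feasible (achievable) number of cops that still exceeds the optimum by at most the claimed factor.
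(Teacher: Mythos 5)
Your proof is correct and follows exactly the paper's own argument: compute $w(G)$ via Lemma~\ref{interval_wG}, then sandwich $c_{\infty}(G)$ between $w(G)$ (from Lemma~\ref{lower_wide} applied to a $w(G)$-wide interval subgraph) and $3w(G)$ (from Lemma~\ref{interval_upperbound}). Your closing remark about reporting $3w(G)$ so the output is an achievable cop count is a nice clarification, but otherwise there is nothing to add.
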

\begin{proof}
Given $G$, the sequence $(V_1,V_2,\dots,V_l)$ can be found efficiently.
Then $w(G)$ can be computed in polynomial time by Lemma~\ref{interval_wG}.
The value $3w(G)$ is a 3-approximation for $c_{\infty}(G)$ by Lemmas~\ref{lower_wide}~and~\ref{interval_upperbound}.
\end{proof}

Next we prove that $c_{\infty}(G) = O(\sqrt{n})$.
Before doing so, we note that this bound is tight:
let $G$ be the strong product of the path on $3m$ vertices and the complete graph on $m$ vertices.
That is,
$$V(G) = \{1,2,\dots,3m\} \times \{1,2,\dots,m\},$$ and
$$\{ (i,j), (k,l)\} \in E(G) \ \mathrm{if}\ (i,j) \neq (k,l)\ \mathrm{and}\ |i-k| \leq 1.$$
Then $G$ is an interval graph with $3m^2$ vertices, and is $m$-wide itself, hence $$c_{\infty}(G) \geq m = \Omega(\sqrt{|V(G)|}).$$

We will need a lemma about minimum dominating sets in interval graphs,
which may not be the best possible, but suffices for our purposes.

\begin{lemma}
\label{lem:mindominating}
Let $A$ be a minimum dominating set of $G$.
Every vertex $v\in A$ is adjacent to at most two vertices of $A$,
and every vertex $v\notin A$ is adjacent to at most five vertices of $A$.
\end{lemma}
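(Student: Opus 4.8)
The plan is to extract everything from two consequences of minimality. First, since $A$ is minimal, each $u \in A$ has a \emph{private neighbour}: a vertex $p$ with $\overline{N}(p) \cap A = \{u\}$ (otherwise $A \setminus \{u\}$ would still dominate). In interval terms, $I_p$ meets $I_u$ but is disjoint from every other interval of $A$. Second, if two adjacent $u,w \in A$ satisfied $I_u \subseteq I_w$, then $\overline{N}(u) \subseteq \overline{N}(w)$ and $A \setminus \{u\}$ would dominate; so the intervals of adjacent members of $A$ are \emph{staggered}, meaning neither contains the other. I would first fix these two facts, assuming (after a harmless adjustment) that all $2n$ interval endpoints are distinct.

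The key step, which I expect to drive both bounds, is the claim that no point of the line lies in three intervals of $A$. To prove it I would suppose $a,b,c \in A$ all contain a common point $x$ and order them so their left endpoints satisfy $\ell_a < \ell_b < \ell_c$. They are pairwise adjacent, hence pairwise staggered, which forces their right endpoints into the same order $r_a < r_b < r_c$. Since $I_a$ and $I_c$ both contain $x$, their union is the single interval $[\ell_a, r_c]$, and this interval contains $I_b$. The private neighbour $p_b$ of $b$ has an interval meeting $I_b$ yet disjoint from $I_a$ and $I_c$; but meeting $I_b \subseteq I_a \cup I_c$ forces it to meet $I_a$ or $I_c$, a contradiction. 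Hence every point lies in at most two intervals of $A$.

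For $v \in A$, each neighbour $a \in A$ of $v$ is staggered with $v$, so $I_a$ covers exactly one endpoint of $I_v$ (covering neither would give $I_a \subseteq I_v$, covering both would give $I_v \subseteq I_a$). The left endpoint of $I_v$ lies in $I_v$ and in each neighbour covering it; by the squeeze claim at most two intervals of $A$ contain this point, and one of them is $v$ itself, so at most one neighbour covers it. The right endpoint is identical, giving at most two $A$-neighbours in total.

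For $v \notin A$ the same squeeze bounds by two the neighbours whose intervals contain the left endpoint of $I_v$, and by two those containing the right endpoint, but now $v \notin A$ occupies no slot. What remains is to bound the neighbours whose intervals lie strictly inside $I_v$, and this is the step I expect to be the main obstacle, since the squeeze says nothing about nested intervals. I would resolve it by a \emph{replacement} argument rather than private neighbours: if $a,b \in A$ both had $I_a, I_b \subseteq I_v$, then $\overline{N}(a) \cup \overline{N}(b) \subseteq \overline{N}(v)$, so $(A \setminus \{a,b\}) \cup \{v\}$ would dominate using one fewer vertex, contradicting minimality; hence at most one neighbour lies strictly inside. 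Since these three groups overlap only among intervals containing all of $I_v$, subadditivity gives at most $2+2+1 = 5$ neighbours. The only remaining work is routine bookkeeping of those overlaps and the degenerate shared-endpoint cases, which the distinct-endpoint assumption removes.
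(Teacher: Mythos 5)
Your proposal is correct and takes essentially the same approach as the paper: both split the $A$-neighbours of $v$ by which endpoint of $I_v$ their intervals cover, handle intervals nested inside $I_v$ by the same two-for-one replacement $(A\setminus\{a,b\})\cup\{v\}$, and use minimality of $A$ to show that among intervals of $A$ through a common point, a ``middle'' one would be redundant. The only difference is packaging: you isolate that redundancy as a standalone squeeze lemma (no point lies in three intervals of $A$) proved via private neighbours, whereas the paper runs the equivalent removal argument in-line (discard the interval whose left endpoint lies furthest to the right), and your distinct-endpoint perturbation, while harmless, is not actually needed.
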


\begin{proof}
Let $I_v = [x,y]$ be the interval corresponding to vertex $v$.
First, let $v\in A$.
If there is a vertex $u\in A$ whose corresponding interval contains $I_v$,
then $\overline{N} (v) \subseteq \overline{N} (u)$, which contradicts the minimality of $A$.
If there is a vertex $u\in A$ whose corresponding interval is contained in $I_v$,
then $\overline{N} (u) \subseteq \overline{N} (v)$, which contradicts the minimality of $A$.
So for every $u \in A$ that is adjacent to $v$,
the interval corresponding to $u$ contains exactly one of $x$ and $y$.
If there are two distinct vertices in $N(v) \cap A$ whose corresponding intervals contain $x$,
then one can remove one of them (the one whose left-end-point of the corresponding interval is more to the right) from $A$,
and still have a dominating set, which contradicts the minimality of $A$.
Thus there exists at most one vertex in $N(v) \cap A$ whose corresponding interval contains $x$.
Similarly, there exists at most one vertex in $N(v) \cap A$ whose corresponding interval contains $y$,
so $|N(v) \cap A| \leq 2$.

Second, let $v\notin A$.
If there is a vertex $u\in A$ whose corresponding interval contains $I_v$,
then since $u$ is adjacent to at most two vertices of $A$,
$v$ is adjacent to at most three vertices of $A$.
So we may assume that this is  not the case.
If there are two distinct $u_1,u_2\in A$ whose corresponding intervals are contained in $I_v$,
then $\overline{N} (u_1) \cup \overline{N} (u_2) \subseteq \overline{N} (v)$,
which contradicts the minimality of $A$.
By an argument similar to the one in the previous case,
it can be shown that there are at most two distinct vertices in $A\cap N(v)$
whose corresponding intervals contain $x$.
Similarly, there are at most two distinct vertices in $A\cap N(v)$
whose corresponding intervals contain $y$.
Thus, $v$ is adjacent to at most five vertices of $A$.
\end{proof}

\begin{theorem}
Let $G$ be a connected interval graph with $n$ vertices.
Then $c_{\infty}(G) = O(\sqrt n)$.
\end{theorem}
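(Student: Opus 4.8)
The plan is to reduce the theorem to a bound on $w(G)$ and then bound $w(G)$ by a single double-counting argument. By Lemma~\ref{interval_upperbound} we have $c_{\infty}(G) \leq 3 w(G)$, so it suffices to prove $w(G) = O(\sqrt n)$. Write $M = w(G)$ and let $H = G[a,b]$ be an $M$-wide interval subgraph realizing this value; note that $H$, being an induced subgraph of $G$, is itself a (connected) interval graph.

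From the two defining properties of $M$-wideness I would extract two numerical facts about $H$. First, condition~(i) says $H$ is $M$-connected, which forces the minimum degree of $H$ to be at least $M$, so $|\overline{N}(v) \cap V(H)| \geq M+1$ for every $v \in V(H)$. Second, condition~(ii) says that no set of fewer than $M$ vertices of $G$ dominates $V(H)$; as observed in the proof of Lemma~\ref{interval_wG}, such a dominating set can always be taken inside $H$, so the domination number of $H$ is at least $M$. Thus a minimum dominating set $A$ of $H$ satisfies $|A| \geq M$.

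The crux is a double count of the quantity $\sum_{u \in A} |\overline{N}(u) \cap V(H)|$, read as counting incidences between vertices of $H$ and the closed neighbourhoods (within $H$) of the members of $A$. On the one hand each closed neighbourhood has size at least $M+1$, so this sum is at least $|A|(M+1) \geq M(M+1) > M^2$. On the other hand, a fixed $v \in V(H)$ contributes to the sum once for each $u \in A$ with $v \in \overline{N}(u)$, that is $[v \in A] + |N(v) \cap A|$ times. Here I would invoke Lemma~\ref{lem:mindominating} applied to the interval graph $H$: a vertex of $A$ is adjacent to at most two vertices of $A$, and a vertex outside $A$ to at most five, so each $v$ contributes at most $5$. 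Hence the sum is at most $5\,|V(H)| \leq 5n$. Comparing the two estimates gives $M^2 < 5n$, so $M < \sqrt{5n}$, and therefore $c_{\infty}(G) \leq 3M = O(\sqrt n)$.

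The only delicate point, and precisely the reason Lemma~\ref{lem:mindominating} is needed, is the upper side of the double count. In an arbitrary graph with minimum degree $M$ the domination number can be as large as $\Theta(n \log M / M)$, so from $|A| \geq M$ one could only deduce $M = O(\sqrt{n \log n})$; equivalently, in a general graph a single vertex could lie in as many as $|A|$ of the closed neighbourhoods of $A$, and the incidence count would be worthless. The interval structure is exactly what saves us: Lemma~\ref{lem:mindominating} guarantees that a minimum dominating set is \emph{spread out}, with every vertex meeting only a bounded number of its members, which caps each vertex's contribution by a constant and removes the logarithmic factor. I expect checking that Lemma~\ref{lem:mindominating} transfers verbatim to the induced subgraph $H$ to be routine, so the main conceptual step is recognizing that this bounded-overlap property is what drives the whole estimate.
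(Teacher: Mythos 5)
Your proof is correct, and it is a genuinely cleaner assembly of the same two ingredients the paper uses: the reduction $c_{\infty}(G)\leq 3w(G)$ from Lemma~\ref{interval_upperbound}, and the double count $\sum_{u\in A}\left(deg(u)+1\right)\leq 5n$ powered by Lemma~\ref{lem:mindominating}. The paper argues contrapositively that an \emph{arbitrary} interval subgraph $G[a,b]$ is not $(\sqrt{5n}+3)$-wide, and because it extracts a minimum-degree vertex from the smaller auxiliary subgraph $G[a',b']$ (set up so that $u_a,u_b$ survive outside it), a small degree there does not directly bound the degree in $G[a,b]$; the paper must therefore convert ``small degree'' into ``small cut-set $V_i$ separating $u_a$ from $u_b$,'' which is what forces the machinery of the boundary vertices $u_a,u_b$, the indices $a',b'$, the cut-set structure of Lemma~\ref{interval_cutsets}, and the final case split on $\min\{t,\delta+1\}\leq\sqrt{5n}$. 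You avoid all of that by working directly with a widest subgraph $H$ realizing $w(G)=M$: condition (i) ($M$-connectivity of $H$ itself) immediately gives minimum degree at least $M$ in $H$, condition (ii) immediately gives domination number at least $M$ (and for this direction you do not even need the observation from Lemma~\ref{interval_wG} that a dominating set can be moved inside $H$ --- a dominating set of $H$ inside $H$ is already a set $S\subseteq V(G)$ with $V(H)\subseteq \overline{N}(S)$, so condition (ii) bounds it directly), and the two facts merge into the single inequality $M(M+1)\leq 5n$ with no case analysis. Your transfer of Lemma~\ref{lem:mindominating} to $H$ is indeed routine: its proof uses only an interval representation and minimality of the dominating set (connectivity plays no role), and the paper itself applies it to the subgraph $G[a',b']$. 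The two arguments prove equivalent statements ($w(G)<\sqrt{5n}+3$ versus $M<\sqrt{5n}$) with essentially the same constant, so nothing is lost; your version is simply shorter and isolates more transparently why the bounded-overlap property of minimum dominating sets in interval graphs is the engine of the bound.
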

\begin{proof}
By Lemma~\ref{interval_upperbound} it is enough to show that $w(G) = O(\sqrt n)$.
Let $G[a,b]$ be an arbitrary interval subgraph of $G$.
We just need to prove that $G[a,b]$ is not $(\sqrt{5n}+3)$-wide.
Choose two arbitrary vertices $u_a \in V_a, u_b \in V_b$.
Let $a'$ be the smallest index in $\{a,a+1,\dots,b\}$ with $u_a \notin V_{a'}$,
and $b'$ be the largest index in $\{a,\dots,b\}$ with $u_b \notin V_{b'}$.
If either of these indices does not exist or $a'> b'$, then $\{u_a,u_b\}$ is a dominating set for $G[a,b]$, so it is not $(\sqrt{5n}+3)$-wide.
So, we may assume that $a'$ and $b'$ exist.

Consider the graph $G[a',b']$.
Let $n_1$ be its number of vertices, $T$ be a minimum dominating set for it, and $\delta$ be its minimum degree.
Let $t = |T|$.
Note that $T\cup \{u_a,u_b\}$ is a dominating set for $G[a,b]$, so the domination number of $G[a,b]$ is  at most $t+2$.
Moreover, $G[a',b']$ is an interval graph,
so by Lemma~\ref{lem:mindominating},
every vertex $v \in V\left(G[a',b']\right) \setminus T$ is adjacent to at most five vertices of $T$,
and every vertex $v\in T$ is adjacent to at most two vertices of $T$,
hence (denoting the degree of $u$ in $G[a',b']$ by $deg(u)$) we have
$$t(\delta+1) \leq \sum_{u\in T} \left(deg(u)+1\right) \leq 5n_1 \leq 5n,$$
so $\min\{t, \delta +1\} \leq \sqrt{5n}$.

If $t \leq \sqrt{5n}$ then the domination number of $G[a,b]$ is at most $\sqrt{5n}+2$ so it is not $(\sqrt{5n}+3)$-wide.
So we may assume that $\delta+1 \leq \sqrt{5n}$.
Let $u$ be a vertex of minimum degree in $G[a',b']$, which is contained in some $V_i, a' \leq i \leq b'$.
Thus $|V_i| \leq \delta+1 \leq \sqrt{5n}$ and $V_i$ is a cut-set in $G[a,b]$ (as it separates $u_a,u_b$), so $G[a,b]$ is not $(\sqrt{5n}+1)$-connected, and hence not $(\sqrt{5n}+3)$-wide.
\end{proof}


\section{Chordal Graphs}
\label{chp:chordal}

A \emph{chordal} graph is a graph that does not have an induced cycle with more than 3 vertices.
Note that any interval graph is chordal.
It is well known that in the original Cops and Robber game,
a single cop can capture the robber in a chordal graph
(an easy way to see this is by considering a tree decomposition
of $G$ in which each bag induces a clique in $G$).
However, when the robber has unbounded speed the situation is quite different.
In this section we prove that there exist chordal graphs $G$ with $c_{\infty}(G) = \Omega(n / \log n)$.
More precisely, it is shown that for every positive integer $m$,
there exists a chordal graph $G$ with $O(m \log m)$ vertices having $c_{\infty}(G) \geq m$.

\begin{definition}[access, accessible]
Say the robber  \emph{has access} to a subset $X\subseteq V(G)$
if there exists a cop-free path from the robber's vertex to a vertex in $X$.
A pair $(X,v)$ with $X\subseteq V(G)$ and $v\in V(G)$ is called \emph{accessible} if
\begin{itemize}
\item $c_{\infty}(G) \geq |X|$,
\item $N(v) = X$, and
\item if there are $|X|-1$ cops in the game, then there exists a strategy for the robber with the following properties:
the robber has access to $X$ in every round, but she never moves to a vertex in $X\cup\{v\}$.
\end{itemize}
\end{definition}

In Figure~\ref{fig:accessible}, $(X_i,v_i)$ is an accessible pair in $G_i$ for $i=1,2$.

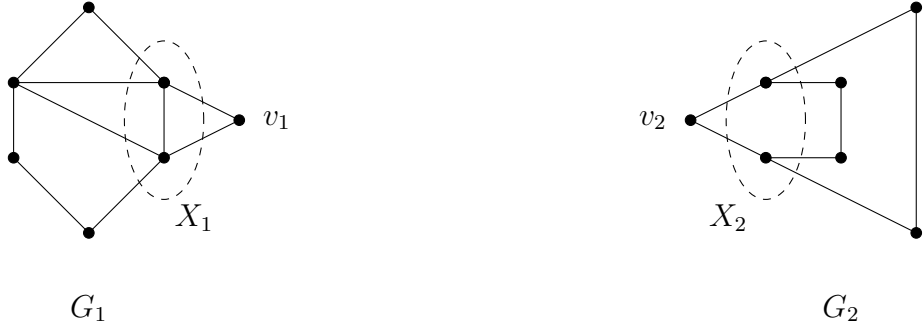
\begin{figure}
\begin{center}
\begin{tikzpicture}[thin]
        \draw (0,0) -- (-1,1) -- (-1,2) -- (0,3) -- (1,2) -- (2,1.5) -- (1,1) -- (1,2) -- (-1,2) -- (1,1) -- (0,0);
		\node at (0,0) [matched]{};
		\node at (-1,1) [matched]{};
		\node at (-1,2) [matched]{};
		\node at (0,3) [matched]{};
		\node at (1,2) [matched]{};
		\node at (2,1.5) [matched]{};
		\node at (1,1) [matched]{};
		\node at (1,2) [matched]{};

        \draw[dashed](1,1.5) ellipse(15pt and 30pt);
        \node at (1.4,0.2) {$X_1$};
        \node at (0,-1) {$G_1$};
        \node at (2.5,1.5) {$v_1$};

        \draw (9,1) -- (10,1) -- (10,2) -- (9,2) -- (11,3) -- (11,0) -- (8,1.5) -- (9,2);
		\node at (9,1) [matched]{};
		\node at (10,1) [matched]{};
		\node at (10,2) [matched]{};
		\node at (9,2) [matched]{};
		\node at (11,3) [matched]{};
		\node at (11,0) [matched]{};
		\node at (8,1.5) [matched]{};
		\node at (9,2) [matched]{};

        \draw[dashed](9,1.5) ellipse(15pt and 30pt);
        \node at (8.5, 0.2) {$X_2$};
        \node at (10,-1) {$G_2$};
        \node at (7.5, 1.5) {$v_2$};

\end{tikzpicture}

\end{center}
\caption{Examples of accessible pairs}
\label{fig:accessible}
\end{figure}

\begin{lemma}
\label{accessible_sum}
Let $G_1,G_2$ be graphs on disjoint vertex sets,
and for $i=1,2$, $(X_i,v_i)$ be an accessible pair in $G_i$ with $|X_i|=k$.
Let $G$ be  a graph with vertex set $V(G) = V_1 \cup U_1 \cup X \cup U_2 \cup V_2 \cup \{v\}$,
and such that
\begin{itemize}
\item For $i=1,2$, $V_i=V(G_i) \setminus \{v_i\}$.
\item We have $|U_1| = |X| = |U_2| = 2|X_1| = 2|X_2|=2k$ and $V_1,U_1,X,U_2,V_2$ are disjoint.
\item The following pairs induce complete bipartite subgraphs of $G$: $(X_1, U_1)$, $(U_1, X)$, $(X, U_2)$, $(U_2, X_2)$.
\item There is no other edge between any two of $V_1,U_1,X,U_2,V_2$, but there can be arbitrary edges inside $U_1,X,U_2$.
\item The set of neighbours of $v$ is precisely $X$.
\end{itemize}
Then $(X,v)$ is an accessible pair in $G$.
\end{lemma}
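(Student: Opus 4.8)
The plan is to verify the three defining properties of an accessible pair for the pair $(X,v)$ in $G$. The middle property, $N(v)=X$, holds by construction. The remaining two properties will both follow once I exhibit a strategy by which the robber evades $|X|-1=2k-1$ cops forever while always retaining access to $X$ and never stepping onto $X\cup\{v\}$: such a strategy gives $c_{\infty}(G)\ge 2k=|X|$ and is exactly the third property. So the whole proof reduces to designing this robber strategy, and the invariant I will maintain is that at the end of each of her turns the robber occupies some $V_i\setminus X_i$ with a cop-free path to $X_i$ inside $G[V_i]$.

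First I would record the structural features that make the strategy possible. Since $|U_1|=|U_2|=|X|=2k$ while only $2k-1$ cops are present, each of $U_1$, $X$, $U_2$ always contains a cop-free vertex; because $(U_1,X)$ and $(X,U_2)$ are complete bipartite, these free vertices are mutually adjacent, so there is always a cop-free ``channel'' joining $U_1$, $X$, $U_2$, and (via the joins $(X_1,U_1)$ and $(U_2,X_2)$) joining every vertex of $X_1$ to every vertex of $X_2$ through the middle. Consequently, if the robber sits in $V_i\setminus X_i$ and has a cop-free path to $X_i$ inside $G[V_i]$, she automatically has access to $X$: follow that path to a free vertex of $X_i$, cross the free channel, and stop before entering $X$. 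Thus maintaining access to $X$ reduces to maintaining access to $X_i$ inside whichever gadget she currently occupies.

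The robber keeps a ``current gadget'' $i$, lives in $V_i\setminus X_i$, and runs the strategy guaranteed by the accessible pair $(X_i,v_i)$ in $G_i$, treating the cops lying in $V_i$ as the adversaries of that sub-game; this is legitimate because $G[V_i]=G_i-v_i$ and the $G_i$-strategy never needs $v_i$, whose only neighbours lie in $X_i$. That strategy keeps her safe and maintains access to $X_i$ as long as at most $k-1$ cops are in $V_i$. Writing $c(S)$ for the number of cops occupying a set $S$, we have $c(V_1)+c(V_2)\le 2k-1$, so at every moment at least one gadget contains at most $k-1$ cops; hence whenever her current gadget is invaded the other one is available. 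A cop can enter $V_i$ only by stepping onto its boundary $X_i$ (the only edges leaving $V_i$ form the join $(X_i,U_i)$), and to \emph{enter} $V_j$ the robber merely needs a free vertex of $X_j$, which exists since $c(X_j)\le c(V_j)\le k-1<k=|X_j|$. Together with the free channel she can therefore cross from one gadget to the other in a single unbounded-speed move and resume the corresponding sub-strategy, ending in $V_j\setminus X_j$ (the transit passes through, but does not terminate on, vertices of $X$).

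The main obstacle is the \emph{exit}: when forced to switch, the robber must still reach a cop-free vertex of her own boundary $X_i$, and since $|X_i|=k$ the cops could in principle occupy all of $X_i$ and seal her in. I would resolve this by showing the cops can never simultaneously seal $X_i$ and flood $V_j$. Sealing $X_i$ costs $k$ cops on $X_i\subseteq V_i$, while flooding $V_j$ (to defeat the $G_j$-strategy) costs $k$ cops in $V_j$, needing $2k>2k-1$ cops; so at the first round with $c(V_i)\ge k$ the other gadget still satisfies $c(V_j)\le k-1$, and $X_i$ cannot already be fully occupied while the robber retained access to $X_i$ one round earlier. The delicate case is the near-tight configuration in which $X_i$ is almost sealed and $V_j$ nearly flooded; here I would argue that any cop move completing the seal of $X_i$ must bring a cop out of $U_i$ through the middle, hence must previously have vacated $V_j$, which by the pigeonhole reopens $V_j$ for the robber a move in advance. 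Formalising this timing argument --- that, using her unbounded speed and the permanently free channel, the robber can always leave an invaded gadget through a still-free boundary vertex before the cops can both seal it and block the alternative --- is the crux; the remaining bookkeeping (never ending a move on $X\cup\{v\}$, and re-entering $V_j$ at a vertex from which the $G_j$-strategy can be restarted) is routine given the complete-bipartite structure.
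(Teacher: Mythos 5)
Your reduction to exhibiting an evasion strategy is fine, and your structural observations (the permanently free channel through $U_1,X,U_2$, and that access to $X_i$ implies access to $X$) match the paper's. But the crux you flag --- the exit problem --- is resolved incorrectly, and this is a genuine gap. Your bookkeeping tracks only $c(V_i)$, with the switch triggered at the first round where $c(V_i)\ge k$. This gives the robber no warning: the cops can station $k$ cops in $U_1$ without ever entering $V_1$ (so your trigger never fires and your invariant holds throughout), and then, since $(U_1,X_1)$ induces a complete bipartite graph and $|X_1|=k$, move all $k$ of them onto $X_1$ \emph{in a single cop move}. At that instant $c(V_1)$ jumps from $0$ to $k$ and $X_1$ is entirely occupied; since the only edges leaving $V_1$ are the $X_1$--$U_1$ edges, the robber is sealed inside $V_1\setminus X_1$ with no access to $X$, and the required property has already failed. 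This refutes your claim that ``$X_i$ cannot already be fully occupied while the robber retained access to $X_i$ one round earlier'': access one round earlier is perfectly compatible with a one-move seal executed from $U_i$. Your pigeonhole remark (sealing $X_i$ and flooding $V_j$ needs $2k>2k-1$ cops) is true but irrelevant, because the cops never need to flood $V_j$; sealing $X_i$ alone defeats the robber.

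The fix is precisely the paper's key idea: put the buffer into the invariant. The paper maintains that fewer than $k$ cops occupy $A_i=V_i\cup U_i$, and the robber abandons gadget $i$ as soon as the cops' move brings $k$ or more cops into $A_i$. Because any cop lying in $V_i$ after a move must have been in $\overline{N}(V_i)\subseteq A_i$ before it, at the moment this trigger fires there are still fewer than $k$ cops in $V_i$; hence $X_i$ is not sealed, the robber still has a cop-free path to $X_i$, and she escapes through the free channel to the other side, where $c(A_j)\le k-1$ restores the invariant. A secondary (related) flaw: you run the $G_i$-strategy against only the cops presently in $V_i$, so when an outside cop steps onto $X_i$ it appears in the sub-game ``from nowhere,'' which is not a legal evolution of that game, and the accessible-pair guarantee cannot be invoked for it. The paper avoids this by modelling every cop outside $V_i$ as a cop standing at $v_i$ (whose neighbourhood is exactly $X_i$), so that entries through $X_i$ are legal moves of the sub-game; combined with counting cops in $A_i$ rather than $V_i$, this keeps the number of effective sub-game cops at most $k-1$ for as long as the robber stays, which is exactly what your version lacks.
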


\begin{figure}
\begin{center}
\begin{tikzpicture}[thin]
        \draw (0,0) -- (-1,1) -- (-1,2) -- (0,3) -- (1,2) -- (1,1) -- (1,2) -- (-1,2) -- (1,1) -- (0,0);
		\node at (0,0) [matched]{};
		\node at (-1,1) [matched]{};
		\node at (-1,2) [matched]{};
		\node at (0,3) [matched]{};
		\node at (1,2) [matched]{};
		\node at (1,1) [matched]{};
		\node at (1,2) [matched]{};

        \draw[dashed](1,1.5) ellipse(15pt and 27pt);
        \node at (0.9,0.2) {$X_1$};
        \node at (0,-1) {$V_1$};

        \draw (9,1) -- (10,1) -- (10,2) -- (9,2) -- (11,3) -- (11,0) -- (8,1.5) -- (9,2);
		\node at (9,1) [matched]{};
		\node at (10,1) [matched]{};
		\node at (10,2) [matched]{};
		\node at (9,2) [matched]{};
		\node at (11,3) [matched]{};
		\node at (11,0) [matched]{};
		\node at (9,2) [matched]{};

        \draw[dashed](9,1.5) ellipse(15pt and 27pt);
        \node at (9.2, 0.2) {$X_2$};
        \node at (10,-1) {$V_2$};

		\node at (3,0) [matched]{};
		\node at (3,1) [matched]{};
		\node at (3,2) [matched]{};
		\node at (3,3) [matched]{};

        \foreach \x in {3,5,7} {
            \foreach \y in {0,1,2,3} {
                \node at (\x, \y) [matched]{};
            }
        }

        \foreach \y in {0,1,2,3} {
            \foreach \z in {0,1,2,3} {
                \draw(3,\y) -- (5,\z);
                \draw(5,\y) -- (7,\z);
                \draw(3,\y) -- (1,1);
                \draw(3,\y) -- (1,2);
                \draw(7,\y) -- (9,1);
                \draw(7,\y) -- (9,2);
                \draw (6,4) .. controls (5,\y - 0.1) .. (5,\y);
            }

        }

        \node at (6,4) [matched] {};
        \node at (6.25,4) {$v$};

        \draw [dashed, rounded corners = 7mm] (-1.8,-0.5) rectangle (2,3.5);
        \foreach \x in {3,5,7} {
            \draw [dashed, rounded corners = 3mm] (\x - 0.6, -0.5) rectangle (\x + 0.6, 3.5);
        }
        \draw [dashed, rounded corners = 7mm] (8,-0.5) rectangle (12,3.5);
        \node at (3,-1) {$U_1$};
        \node at (5,-1) {$X$};
        \node at (7,-1) {$U_2$};

        \draw (3,3) -- (3,2);
        \draw (5,1) -- (5,3);

\end{tikzpicture}

\end{center}
\caption{An example for Lemma~\ref{accessible_sum}}
\label{fig:accessible_sum}
\end{figure}
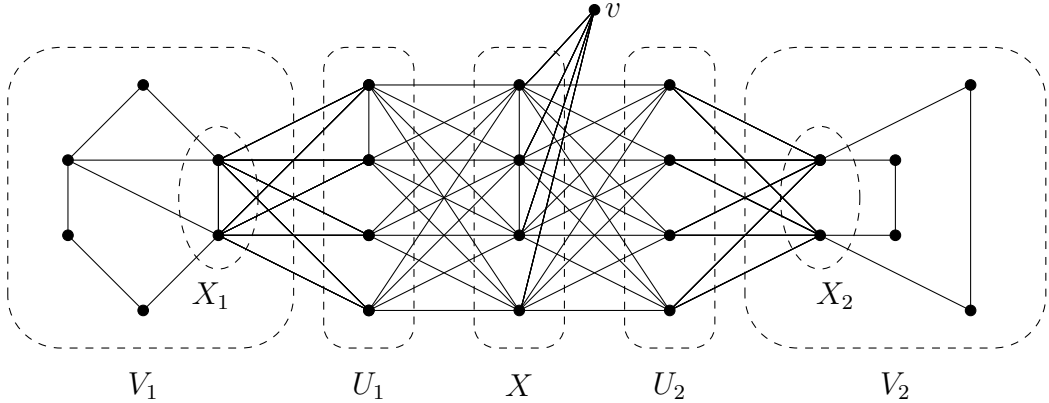

In Figure~\ref{fig:accessible_sum} an example of such a $G$ is given, where $G_1$ and $G_2$ are graphs
shown in Figure~\ref{fig:accessible}.

\begin{proof}
Assume that there are $2k-1$ cops in the game. We prove that the robber has an escaping strategy
that evades the cops forever, and is such that she has access to $X$ in every round, but never moves to $X\cup\{v\}$.
Let $A_i = V_i \cup U_i$ for $i=1,2$.
The strategy has the following invariant: at the end of each round, the robber is at a vertex of $V_j$ for some $1\leq j \leq 2$,
such that there are less than $k$ cops in $A_j$, and the robber has access to $X_j$.
If we provide such a strategy, then since the robber has access to $X_j$ and there are $k$ disjoint paths from $X_j$ to $X$,
the robber has access to $X$ in every round.
We may assume without loss of generality that all the cops start at some vertex in $V_2$, and the robber starts at some vertex in $V_1$,
so the invariant holds at the beginning (with $j=1$).

Assume that the invariant holds at the end of the previous round, say with $j=1$.
This means that the robber is at a vertex of $V_1$, has access to $X_1$, and
there are less than $k$ cops in $A_1$.
In the next round, first the cops move.
If after their move, there are still less than $k$ cops in $A_1$,
then the robber assumes the game is actually played in $G_1$,
where she considers all cops in $V_1$ as they are in $G_1$,
and she considers all cops in $V(G) \setminus V_1$ as if they are at $v_1$;
then she just plays her escaping strategy in $G_1$,
thus she will not go to $X_1\cup\{v_1\}$ and will not be captured in the next round.
Recall that $v_1$ is the vertex in $G_1$ whose set of neighbours is $X_1$.

In the other case, there are at least $k$ cops in $A_1$.
There are at most $k-1$ cops in $A_2$ at this moment, and in particular, at most $k-1$ cops in $V_2$.
Recall that $(X_2,v_2)$ is an accessible pair in $G_2$, which means, in particular, that there exists a vertex $u\in V_2$
such that at this moment there is a cop-free path $P$ from $X_2$ to $u$.
(To see this, note that if one just considers the graph induced by $V_2$ and assumes that the game is played only in this subgraph,
then the robber can choose a vertex that has access to $X_2$.)
Since at the end of the previous round there were less than $k$ cops in $A_1$, there are less than $k$ cops in $V_1$ at this moment.
Hence the robber has access to $X_1$ (note that cops in $V(G) \setminus V_1$ will not block the robber's access to $X_1$),
through which she can pass through $U_1, X, U_2, X_2$ (notice that each of these has at least one cop-free vertex),
and finally go to $u$ along the path $P$.
\end{proof}

It is easy to verify that if both $G_1$ and $G_2$ are chordal graphs and the subgraphs induced by $U_1$, $X$, and $U_2$ are complete graphs,
then the resulting graph $G$ is chordal as well.
This lets us deduce the following lower bound.

\begin{theorem}
\label{chordal_lower}
For every positive integer $m$, there exists a chordal graph $G$ having $O(m \log m)$ vertices and $c_{\infty}(G) \geq m$.
\end{theorem}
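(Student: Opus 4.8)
The plan is to construct $G$ by iterating Lemma~\ref{accessible_sum} as a doubling step: given an accessible pair of ``width'' $k$ it produces an accessible pair of width $2k$, so after about $\log_2 m$ levels we reach width at least $m$, and then the first clause in the definition of \emph{accessible} gives $c_\infty(G) \geq |X| \geq m$. To start the recursion I would exhibit a base accessible pair of width $1$: let $H_0$ be a triangle on $\{x,a,b\}$ together with a pendant vertex $v$ whose only neighbour is $x$, and set $X = \{x\}$. This graph is chordal, $N(v) = X$, and $c_\infty(H_0) \geq 1$ trivially; with $|X|-1 = 0$ cops the robber simply sits at $a$, which always has access to $X$ and never needs to enter $X \cup \{v\}$. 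Hence $(X,v)$ is accessible with $|X| = 1$.

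Next I would define a sequence $H_0, H_1, H_2, \dots$ in which $H_{t+1}$ is obtained by applying Lemma~\ref{accessible_sum} to two disjoint copies of $H_t$, choosing the sets $U_1, X, U_2$ to induce complete graphs. By the remark immediately preceding the theorem this keeps every $H_t$ chordal, and by the lemma $H_{t+1}$ carries an accessible pair whose width is twice that of $H_t$. An easy induction then shows that $H_t$ has an accessible pair of width $2^t$.

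Then I would track the vertex count. Writing $n_t = |V(H_t)|$ and using that the lemma glues two copies of $V_i = V(H_t) \setminus \{v_i\}$ together with the three inner sets $U_1, X, U_2$, each of size $2 \cdot 2^t$, plus the single new vertex $v$, one obtains the recurrence $n_{t+1} = 2(n_t - 1) + 3 \cdot 2 \cdot 2^t + 1 = 2 n_t + 6 \cdot 2^t - 1$. Solving this linear recurrence (the forcing term $6 \cdot 2^t$ is resonant with the homogeneous solution, producing a $t\,2^t$ contribution) gives $n_t = O(t\,2^t)$. Choosing $t = \lceil \log_2 m \rceil$ therefore yields a chordal graph $H_t$ with an accessible pair of width $2^t \geq m$, hence $c_\infty(H_t) \geq m$, while $n_t = O(t\,2^t) = O(m \log m)$, as required.

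The substantive work is all carried by Lemma~\ref{accessible_sum} together with its chordality remark, so the only real obstacles are routine verifications: checking that the base pair genuinely satisfies all three clauses of the definition of accessible, and setting up and solving the recurrence correctly. The latter is where a careless slip is most likely, since one must account both for the $-1$ coming from discarding each $v_i$ when forming $V_i$ and for the factor $3 \cdot 2 \cdot 2^t$ contributed by the three inner cliques; once the doubling scheme is fixed, the logarithmic recursion depth against the geometric growth of the width is exactly what forces the $O(m \log m)$ bound.
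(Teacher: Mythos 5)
Your proposal is correct and takes essentially the same route as the paper: the paper's proof likewise iterates Lemma~\ref{accessible_sum} as a width-doubling step, tracks vertices through the recurrence $g(m) \leq 2\left(g(\lceil m/2 \rceil)-1\right) + 6\lceil m/2 \rceil + 1$, and solves it to get $O(m\log m)$ vertices. The only cosmetic difference is the starting point: the paper seeds the recursion with a $7$-vertex accessible pair of width $2$ (the graphs of Figure~\ref{fig:accessible}), whereas you seed it with a $4$-vertex pair of width $1$, which also satisfies the definition.
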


\begin{proof}
For every $m$, let $g(m)$ denote the number of vertices of the smallest connected chordal graph that has an accessible pair $(X,v$) with $|X|=m$.
Then, by Lemma~\ref{accessible_sum} and the discussion above,
$$g(2) \leq 7, \quad g(m) \leq 2 (g(\lceil m/2 \rceil)-1) + 6 \lceil m/2 \rceil + 1,$$
which gives $g(m) = O(m \log m)$.
\end{proof}


\section{Cop Number and Treewidth}
\label{chp:treewidth}

A \emph{tree decomposition} of a graph $G$ is a pair $(T,W)$, where $T$ is a tree and
$W = (W_t : t \in V(T))$ is a family of subsets of $V(G)$ such that
\begin{itemize}
\item[(i)] $\bigcup_{t\in V(T)} W_t = V(G)$, and every edge of $G$ has both endpoints in some $W_t$, and
\item[(ii)] For every $v\in V(G)$, the set $\{t : v \in W_t\}$ induces a subtree of $T$.
\end{itemize}
The \emph{width} of $(T,W)$ is
$$\max\{|W_t| - 1 : t \in V(T)\},$$
and the \emph{treewidth} of $G$, written $tw(G)$,
is the minimum width of a tree decomposition of $G$.

We will use the following facts about tree decompositions,
whose proofs can be found in Section~12.3 of the textbook by Diestel~\cite{diestel}.

\begin{proposition}
\label{prop:decompositions}
Let $(T,W)$ be a tree decomposition of a graph $G$.
\begin{itemize}
\item [(a)] Let $A$ be the vertex set of a clique in $G$.
Then there is a $t\in V(T)$ with $A \subseteq W_t$.
\item[(b)] Let $t_1t_2$ be an edge of $T$,
and let $T_1$ and $T_2$ be the components of $T-t_1t_2$,
with $t_1 \in T_1$ and $t_2 \in T_2$.
Define $X = W_{t_1} \cap W_{t_2}$, $U_1 = \cup_{t\in T_1} W_t$ and $U_2 = \cup_{t\in T_2} W_t$.
Then $X$ is a cut-set in $G$,
and there is no edge between $U_1 \setminus X$ and $U_2 \setminus X$.
\end{itemize}
\end{proposition}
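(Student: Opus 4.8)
The plan is to derive both parts from a single structural observation: by condition (ii), for any fixed vertex $v$ the set of tree-nodes $T_v := \{t \in V(T) : v \in W_t\}$ is a subtree of $T$, and subtrees of a tree enjoy the Helly property, i.e.\ pairwise intersecting subtrees share a common node. With this in hand, both statements become exercises in tracking where the $T_v$ live.

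For part (a), I would attach to each $v \in A$ its subtree $T_v$. Since $A$ is a clique, every pair $u,v \in A$ spans an edge, so by condition (i) some bag $W_t$ contains both endpoints, giving $t \in T_u \cap T_v$; hence the family $\{T_v : v \in A\}$ pairwise intersects. The Helly property then produces a common node $t^\ast \in \bigcap_{v \in A} T_v$, and by the definition of $T_v$ this means $A \subseteq W_{t^\ast}$, as desired. The only point requiring care is the Helly property itself, which I would either cite or establish by an easy induction on $|A|$, using that in a tree the node of a third subtree nearest to the path between two of its members lies in all three.

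For part (b), the heart of the matter is the intermediate claim $U_1 \cap U_2 = X$. The inclusion $X \subseteq U_1 \cap U_2$ is immediate from $X \subseteq W_{t_1} \subseteq U_1$ and $X \subseteq W_{t_2} \subseteq U_2$. For the reverse, take $v \in U_1 \cap U_2$: then $T_v$ meets both $T_1$ and $T_2$, so, being connected, it contains the entire tree-path between those nodes, which necessarily traverses the edge $t_1 t_2$; thus $t_1,t_2 \in T_v$ and $v \in W_{t_1} \cap W_{t_2} = X$. Granting this, the absence of edges between $U_1 \setminus X$ and $U_2 \setminus X$ follows quickly: any such edge lies in a common bag $W_t$ by condition (i), and $t$ belongs to $T_1$ or to $T_2$, placing both endpoints in $U_1$ or both in $U_2$, which forces one endpoint into $U_1 \cap U_2 = X$ --- a contradiction.

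Finally, since $V(G) = U_1 \cup U_2$ by condition (i), every path joining $U_1 \setminus X$ to $U_2 \setminus X$ in $G$ must meet $X$, so $X$ separates these two sides and is therefore a cut-set. I expect no single deep step here; the main obstacle is simply the disciplined use of the connectivity of the subtrees $T_v$, and in particular the claim $U_1 \cap U_2 = X$, which is where the full force of the tree-decomposition axioms is spent and which I regard as the crux of part (b).
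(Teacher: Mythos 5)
Your proof is correct, and it is essentially the argument the paper implicitly relies on: the paper gives no proof of its own for this proposition, deferring to Section~12.3 of Diestel's textbook, where part (a) is proved exactly via the Helly property of subtrees of a tree and part (b) via the identity $U_1 \cap U_2 = X$ combined with condition (i). The only caveat---an imprecision in the paper's statement rather than a flaw in your argument---is that what you actually establish is that $X$ separates $U_1 \setminus X$ from $U_2 \setminus X$, which makes $X$ a cut-set in the paper's sense (i.e.\ $G-X$ has more components than $G$) only when both of these sides are nonempty; this nondegenerate situation is the only one the paper ever uses.
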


For the original Cops and Robber game,
Joret, Kami{\'n}ski, and Theis~\cite{forbidden_subgraphs} proved that for every $G$,
$\frac{tw(G)}{2} + 1$ cops are sufficient to capture the robber.

Write $\Delta=\Delta(G)$ for the maximum degree in $G$.
In this section we prove that for every $G$,
$$\frac{tw(G)+1}{\Delta(G)+1} \leq c_{\infty}(G) \leq tw(G)+1.$$
Moreover, we prove that these bounds are tight.
To prove the lower bound, we relate our Cops and Robber game with another
pursuit-evasion game, called the Helicopter Cops and Robber game.
This game, introduced by Seymour and Thomas~\cite{helicopter},
has two different versions,
and the one we define here is called jump-searching.

\begin{definition}[Helicopter Cops and Robber game (the jump-searching version)]
For $X \subseteq V(G)$, an \emph{$X$-flap} is the vertex set of a connected component of $G-X$.
Two subsets $X,Y \subseteq V(G)$ \emph{touch} if $\overline{N}(X) \cap Y \neq \emptyset$.
A \emph{position} is a pair $(X,R)$, where $X\subseteq V(G)$ and $R$ is an $X$-flap.
($X$ is the set of vertices currently occupied by the cops and $R$ tells us where the robber is
--- since she can run arbitrarily fast, all that matters is which component of $G-X$ contains her.)
At the start, the cops choose a subset $X_0$, and the robber chooses an $X_0$-flap $R_0$.
Note that if there are $k$ cops in the game, then $|X_0| \leq k$.
At the start of round $i$, we have some position $(X_{i-1},R_{i-1})$.
The cops choose a new set $X_i\subseteq V(G)$ with $|X_i| \leq k$ (and no other restriction), and announce it.
Then the robber, knowing $X_i$, chooses an $X_i$-flap $R_i$ which touches $R_{i-1}$.
If this is not possible then the cops have won.
Otherwise, i.e.~if the robber never runs out of valid moves, the robber wins.
\end{definition}

The following lemma establishes a link between the two games.

\begin{lemma}
\label{lem:helicopter_link}
Let $G$ be a graph.
If $k$ cops can capture a robber with unbounded speed in the Cops and Robber game in $G$,
then $k(\Delta+1)$ cops can capture the robber in the Helicopter Cops and Robber game in $G$.
\end{lemma}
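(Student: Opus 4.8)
The plan is to convert the given winning strategy for $k$ cops in the fast-robber game into a winning strategy for $k(\Delta+1)$ cops in the Helicopter game, by running a \emph{shadow} copy of the fast-robber game alongside the real Helicopter game and letting the former drive the latter. The governing correspondence is this: whenever the shadow fast-cops occupy a set $C$ with $|C|\le k$, the Helicopter cops occupy the closed neighbourhood $\overline{N}(C)$. Since $|\overline{N}(C)| \le |C|(\Delta+1) \le k(\Delta+1)$, this is always a legal placement for $k(\Delta+1)$ Helicopter cops, and this is precisely where the factor $\Delta+1$ enters. I would write $C_i$ for the shadow fast-cops' position after their $i$-th move, $X_i = \overline{N}(C_i)$ for the Helicopter cops' position, $r_i$ for the shadow robber's vertex, and $R_i$ for the Helicopter robber's flap.

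The argument will be by contradiction: I would assume the Helicopter robber evades forever and use the simulation to produce an infinite shadow fast-robber game in which the shadow robber is never captured, contradicting that the fast-cops' strategy is winning. To this end I maintain the invariant that, after round $i$, the shadow robber can be placed at some $r_i \in R_i$ so that $(C_i, r_i)$ is a legal, not-yet-captured position of the shadow game consistent with the fast-cops' strategy. At the start, the fast-cops use their initial placement $C_0$, the Helicopter cops take $X_0=\overline{N}(C_0)$, the robber picks a flap $R_0$, and the shadow robber is placed at any $r_0\in R_0$; since $R_0 \cap \overline{N}(C_0)=\emptyset$ the invariant holds.

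For the transition, in round $i$ I first consult the fast-cops' strategy on the shadow position $(C_{i-1},r_{i-1})$ to obtain their move $C_i$, and have the Helicopter cops announce $X_i=\overline{N}(C_i)$. The key observations are that each cop moves to an adjacent vertex or stays, so $C_i \subseteq \overline{N}(C_{i-1}) = X_{i-1}$, whence $r_{i-1}\in R_{i-1}$ is disjoint from $C_i$ and the shadow robber is not captured when the shadow cops move; and that $R_{i-1}$ is likewise disjoint from $C_i$, so any path inside $R_{i-1}$ is $C_i$-free. When the Helicopter robber responds with a flap $R_i$ touching $R_{i-1}$, the touch condition yields a vertex $w\in R_{i-1}\cap\overline{N}(R_i)$. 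If $w\in R_i$ I set $r_i=w$; otherwise $w$ has a neighbour $z\in R_i$, and since $R_i$ avoids $X_i\supseteq C_i$ the vertex $z$ is $C_i$-free, so I set $r_i=z$. In either case a path $r_{i-1}\rightsquigarrow w\,(\to z)$ inside $R_{i-1}$ together with the final edge is a legal fast-robber move into $R_i$, re-establishing the invariant.

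I expect the main obstacle to be exactly this last step: verifying that the Helicopter ``touch'' relation translates into a genuine cop-free path for the fast robber, while keeping the turn order straight (the Helicopter cops move, then the robber, mirroring the fast game). The two containments $C_i\subseteq \overline{N}(C_{i-1})$ and $R_i\cap \overline{N}(C_i)=\emptyset$ are what make the bookkeeping go through, and once they are in place the conclusion is immediate: if the Helicopter robber never ran out of moves, the shadow robber would evade the $k$ fast-cops forever, contradicting the assumed winning strategy, so the $k(\Delta+1)$ Helicopter cops must win.
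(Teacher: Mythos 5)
Your proposal is correct and takes essentially the same approach as the paper's proof: both run a virtual (shadow) fast-robber game whose cop positions $C_i$ drive the Helicopter cops to occupy $\overline{N}(C_i)$, both rely on the two containments $C_i \subseteq \overline{N}(C_{i-1}) = X_{i-1}$ and $R_i \cap \overline{N}(C_i) = \emptyset$, and both convert the touch condition into a valid cop-free move for the shadow robber (you via an explicit $w$-$z$ construction, the paper via connectedness of $R_{i-1}\cup R_i$, which is the same observation). The only cosmetic difference is that you phrase the conclusion as a contradiction, while the paper argues directly that the translated moves stay valid and capture transfers from the virtual game to the real one.
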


\begin{proof}
We consider two games played in two copies of $G$:
the first one, which we call the \emph{real game}, is a game of Helicopter Cops and Robber with $k(\Delta+1)$ cops;
and the second one, the \emph{virtual game}, is the usual Cops and Robber game with $k$ cops and a robber with unbounded speed.
Given a winning strategy for the cops in the virtual game, we need to give a capturing strategy for the cops in the real game.
We translate the moves of the cops from the virtual game to the real game, and translate the moves of the robber from the real game to the virtual game, in such a way that all the translated moves are valid, and if the robber is captured in the virtual game, then she is captured in the real game as well.
Hence, as the cops have a winning strategy in the virtual game, they have a winning strategy in the real game, too.

In the virtual game, initially the cops choose a subset $C_0$ of vertices.
Then the real cops choose $X_0 = \overline{N}(C_0)$.
Recall that $|C_0| \leq k$, so $|X_0| \leq k(\Delta + 1)$.
The real robber chooses $R_0$, which is an $X_0$-flap,
and the virtual robber chooses an arbitrary vertex $r_0 \in R_0$.
In general, at the end of round ${i-1}$ we have $X_{i-1} = \overline{N}(C_{i-1})$ and $r_{i-1} \in R_{i-1}$.

Suppose the virtual robber is not captured in round $i$.
In round $i$, first the virtual cops move to a new set $C_i$.
Each cop either stays still or moves to a neighbour, thus $C_i \subseteq \overline{N}(C_{i-1}) = X_{i-1}$ and
since $R_{i-1}$ was an $X_{i-1}$-flap, $C_i \cap R_{i-1} = \emptyset$.
The real cops choose $X_i = \overline{N}(C_i)$ and announce it.
The real robber, knowing $X_i$, chooses an $X_i$-flap $R_i$ that touches $R_{i-1}$.
If she cannot find a valid move then she is captured and the lemma is proved.
Otherwise, note that by definition $C_i \cap R_i = \emptyset$.
Let $r_i$ be an arbitrary vertex of $R_i$.
The virtual robber moves from $r_{i-1}$ to $r_i$.
Since $R_{i-1}$ and $R_i$ touch, and both of them are connected, $R_{i-1}\cup R_i$ is connected.
Moreover, $C_i$ does not intersect $R_{i-1}\cup R_i$, so this is a valid move in the virtual game.

Now, suppose the virtual robber is captured in round $i$.
We claim that if this happens then the real robber has already been captured in one of the previous rounds.
If this is not the case, then in round $i$, the virtual cops move to a new set $C_i$ such that $r_{i-1} \in C_i$.
Each cop either stays still or moves to a neighbour, thus $C_i \subseteq \overline{N}(C_{i-1}) = X_{i-1}$ and
since $R_{i-1}$ was an $X_{i-1}$-flap, $C_i \cap R_{i-1} = \emptyset$.
But $r_{i-1}\in C_i$ because the virtual robber has been captured in round $i$,
and $r_{i-1}\in R_{i-1}$, thus
$r_{i-1} \in C_i \cap R_{i-1}$, which is a contradiction.
This shows that the real robber will be captured even before the virtual robber, and the proof is complete.
\end{proof}

Seymour and Thomas~\cite{helicopter} proved the following theorem.

\begin{theorem}[\cite{helicopter}]
\label{thm:helicopter}
The minimum number of cops needed to capture a robber in Helicopter Cops and Robber game is equal to the treewidth of the graph plus one.
\end{theorem}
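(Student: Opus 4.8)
The plan is to show that for every $k$, the robber can be caught by $k$ cops exactly when $tw(G)\le k-1$; the minimum such $k$ is then $tw(G)+1$, as claimed. I would organize the argument around the classical duality between tree decompositions and \emph{brambles}: a bramble is a family of connected vertex subsets that pairwise \emph{touch} (any two intersect or are joined by an edge), and its \emph{order} is the least size of a set meeting all of its members. The target is the three-way equivalence, for a fixed number $k$ of cops: the cops win $\iff$ $G$ has no bramble of order exceeding $k$ $\iff$ $tw(G)\le k-1$.

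First I would prove the upper bound: $tw(G)+1$ cops win. Starting from a tree decomposition $(T,W)$ of width $tw(G)$, the cops sweep $T$. They occupy the current bag $W_t$ (of size at most $tw(G)+1$) and then advance along the edge of $T$ pointing toward the subtree whose bags still contain the robber's flap. By Proposition~\ref{prop:decompositions}(b) the vertex set $W_{t_1}\cap W_{t_2}$ of each traversed edge separates the two sides, so the robber's reachable region strictly shrinks down the tree until she is confined to a single bag and caught.

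Next I would prove the lower bound: a bramble of order $k+1$ lets the robber evade $k$ cops forever. It is convenient to pass to the equivalent notion of a \emph{haven} of order $k+1$, a consistent assignment $\beta$ sending each cop-set $X$ with $|X|\le k$ to an $X$-flap $\beta(X)$, where consistency means $\beta(Y)\subseteq\beta(X)$ whenever $X\subseteq Y$; such a $\beta$ is read off from the bramble by letting $\beta(X)$ be the unique flap containing every bramble member disjoint from $X$ (one exists because $X$ cannot meet all of them, and they all lie in one flap since they are connected and pairwise touching). The robber simply occupies $\beta(X_i)$ in round $i$; applying consistency to $X_{i-1}\cap X_i$ places $\beta(X_{i-1})$ and $\beta(X_i)$ in a common flap of $G-(X_{i-1}\cap X_i)$, from which the required touching of consecutive flaps can be verified, so every move is legal and she is never trapped. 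It then remains to establish the genuinely hard implication, the tree-width duality theorem of Seymour and Thomas: if every bramble of $G$ has order at most $k$, then $tw(G)\le k-1$. I would prove its contrapositive, constructing a bramble of order at least $k+1$ whenever no tree decomposition has width below $k$, through their recursive separator analysis, in which one repeatedly splits along small separators and bundles the pieces that resist splitting into the sought bramble.

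The main obstacle is precisely this last implication. The two game directions, and the easy half of the duality, $\text{bramble number}\le tw(G)+1$ (because some bag of any tree decomposition meets every member of a given bramble, by orienting each edge toward the side retaining the unmet members), are all routine. The entire weight of the theorem rests on the converse structural result, whose delicate inductive construction of a high-order bramble from the absence of a narrow decomposition is the reason the equivalence is nontrivial.
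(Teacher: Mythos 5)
First, a point of comparison: the paper does not prove this statement at all. It is imported verbatim from Seymour and Thomas~\cite{helicopter} and used as a black box, whose only role is to combine with Lemma~\ref{lem:helicopter_link} to give the lower bound in Theorem~\ref{thm:treewidth_bound}. So your proposal must be measured against the original Seymour--Thomas proof, whose architecture you do reproduce correctly: sweeping a tree decomposition for the upper bound, and a bramble/haven evasion strategy plus tree-width duality for the lower bound. The genuine gap is exactly the step you defer: the implication that if every bramble of $G$ has order at most $k$ then $tw(G)\leq k-1$. Saying you would prove it ``through their recursive separator analysis'' names a proof rather than gives one. That duality theorem is itself the deep result of Seymour and Thomas, of essentially the same depth as the statement being proved; what you actually establish is only that $tw(G)+1$ cops suffice and that a bramble of order $k+1$ defeats $k$ cops, which pins the cop number between the bramble number of $G$ and $tw(G)+1$ but does not close that interval. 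As a self-contained proof the proposal is therefore incomplete --- though, to be fair, no less complete than the paper, which cites the entire theorem.

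A secondary, repairable flaw: your verification that consecutive flaps touch does not follow from haven consistency alone. Consistency applied to $Z=X_{i-1}\cap X_i$ places $\beta(X_{i-1})$ and $\beta(X_i)$ inside the common flap $\beta(Z)$ of $G-Z$, but under this paper's definition of touching ($\overline{N}(R_i)\cap R_{i-1}\neq\emptyset$) two flaps lying inside a common connected set need not touch; ``lying in the same component of $G-(X_{i-1}\cap X_i)$'' is the move rule in Seymour--Thomas's formulation of jump-searching, which is formally weaker than the touching rule used in the definition here. The fix is to argue directly from the bramble rather than from the derived haven: choose bramble members $S_1$ disjoint from $X_{i-1}$ and $S_2$ disjoint from $X_i$; then $S_1\subseteq\beta(X_{i-1})$, $S_2\subseteq\beta(X_i)$, and since $S_1$ and $S_2$ touch as bramble members, the two flaps containing them touch, so the robber's move to $\beta(X_i)$ is legal.
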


Using this we have the following.

\begin{theorem}
\label{thm:treewidth_bound}
For every graph $G$ we have
$$\frac{tw(G)+1}{\Delta(G)+1} \leq c_{\infty}(G) \leq tw(G)+1,$$
and these bounds are tight.
\end{theorem}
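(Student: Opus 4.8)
The plan is to prove the two inequalities separately and then address tightness by exhibiting examples. The lower bound $\frac{tw(G)+1}{\Delta(G)+1}\le c_{\infty}(G)$ is essentially free from the machinery already assembled: if $c_{\infty}(G)=k$, then $k$ cops win the fast-robber game on $G$, so by Lemma~\ref{lem:helicopter_link} some $k(\Delta+1)$ cops win the Helicopter Cops and Robber game, and by Theorem~\ref{thm:helicopter} the latter requires exactly $tw(G)+1$ cops; hence $tw(G)+1\le k(\Delta+1)$, which rearranges to the claim. No additional argument is needed here.

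For the upper bound I would exhibit an explicit strategy for $tw(G)+1$ cops, driven by a fixed minimum-width tree decomposition $(T,W)$ rooted arbitrarily. The invariant, maintained at the start of each phase, is that the cops occupy exactly one bag $W_t$ and the robber sits in a single flap $F$ of $G-W_t$. Using the subtree property together with Proposition~\ref{prop:decompositions}(b), one first argues that $F$ lies entirely ``beyond'' a single neighbour $t_F$ of $t$ in $T$, since deleting $W_t$ leaves no edge between vertices living in distinct subtree-directions. The cops then advance from $W_t$ to $W_{t_F}$ by keeping the shared cut-set $X:=W_t\cap W_{t_F}$ occupied at all times while walking the remaining $(tw(G)+1)-|X|$ cops to the vertices of $W_{t_F}\setminus X$. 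Because $X$ is a separator (Proposition~\ref{prop:decompositions}(b)) and is never vacated, the robber, however fast, can never cross to $t$'s side, so her reachable region only shrinks while the relocating cops travel along edges.

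Once the relocating cops arrive, the cops occupy $W_{t_F}$, and the robber is confined to a flap $F'\subseteq F$ of $G-W_{t_F}$. A short argument using $U_t\cap U_{t_F}=X$ shows $F'$ must lie \emph{strictly} beyond $t_F$, i.e.\ toward a neighbour of $t_F$ other than $t$; hence the cops' node traces a non-backtracking walk in $T$, which, $T$ being a finite tree, reaches a leaf, where the robber's would-be flap is forced to be empty and she is captured. The counting that makes this work is that every bag has at most $tw(G)+1$ vertices, so pinning $|X|$ cops on the cut-set still leaves enough to cover $W_{t_F}\setminus X$. I expect the main obstacle to be exactly this relocation step: one must verify carefully that keeping $X$ occupied never opens a gap, that vertices vacated on $t$'s side never become reachable to the robber across the occupied separator, and that progress is genuinely strict so the process terminates — this is the part I would write in full detail.

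Finally, for tightness I would give one witness per inequality. The complete graph $K_n$ makes the lower bound tight: here $tw(K_n)+1=\Delta(K_n)+1=n$, so the ratio equals $1$, and a single cop dominates all of $K_n$, giving $c_{\infty}(K_n)=1$. For the upper bound the natural route is a bounded-degree, high-treewidth family — for instance a $\sqrt{n}\times\sqrt{n}$ grid, or a $k$-wide graph as constructed in Section~\ref{chp:interval} — for which the already-proved lower bound forces $c_{\infty}(G)=\Theta(tw(G))$, so $tw(G)+1$ cannot be replaced by any $o(tw(G))$ term. I would remark that one should not hope for an exact $(tw+1)$-connected witness, since every graph satisfies $\delta(G)\le tw(G)$ and hence is never $(tw+1)$-connected; this is why the upper bound is tight up to the constant factor rather than by literal equality in nondegenerate cases.
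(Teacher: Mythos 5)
Your lower bound argument (via Lemma~\ref{lem:helicopter_link} and Theorem~\ref{thm:helicopter}), your bag-by-bag cop strategy for the upper bound, and your use of $K_n$ for tightness of the lower bound all match the paper's proof essentially step for step; your write-up of the relocation invariant is, if anything, slightly more careful about why progress in the tree is strict.

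The genuine gap is in the tightness of the upper bound, and it is a gap of substance, not of presentation. The theorem asserts that the bound $c_{\infty}(G) \leq tw(G)+1$ is tight, and the paper means this exactly: it constructs, for each $m\geq 4$, a graph $G$ consisting of an independent set $\{v_1,\dots,v_m\}$ in which every pair $v_i,v_j$ is joined by $m$ internally disjoint paths of length $3$. A star-shaped tree decomposition (center bag $\{v_1,\dots,v_m\}$, one leaf bag per path) shows $tw(G)\leq m-1$, while the robber beats $m-1$ cops by always sitting on some $v_i$ outside the cops' closed neighbourhood: since the $v_i$'s are pairwise at distance $3$, each cop controls at most one of them, and since any two $v_i$'s are joined by $m$ disjoint paths, at least one such path is cop-free. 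Hence $c_{\infty}(G)\geq m \geq tw(G)+1$, so equality holds. Your proposal only delivers tightness up to a constant factor (the grid gives $c_{\infty}\geq (tw+1)/5$), which is a weaker statement than the theorem claims. Worse, your closing remark argues that exact tightness is \emph{impossible} because $\delta(G)\leq tw(G)$ prevents any graph from being $(tw(G)+1)$-connected. That inference is a non sequitur: high connectivity (or $k$-wideness in the sense of Section~\ref{chp:interval}) is sufficient but not necessary for a large cop number. The paper's witness has minimum degree $2$, hence connectivity $2$, yet its cop number equals $tw+1$, because the robber's strategy exploits many internally disjoint paths between far-apart hub vertices rather than global connectivity. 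So the correct conclusion is the opposite of the one you drew, and this part of the proof needs to be replaced by an explicit construction of the paper's kind.
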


\begin{proof}
The lower bound follows from Lemma~\ref{lem:helicopter_link} and Theorem~\ref{thm:helicopter}.
To prove tightness of the lower bound, let $G$ be the complete graph on $n$ vertices.
Then
it follows from part (a) of Proposition~\ref{prop:decompositions}
that $G$ has treewidth $n-1$.
A single cop can capture the robber in $G$, since $G$ has domination number one.
Hence, the complete graph on $n$ vertices
has treewidth $n-1$, maximum degree $n-1$, and cop number 1, so the lower bound is tight.

Now we prove the upper bound. Consider a tree decomposition $(T,W)$ of $G$ having minimum width.
Assume that there are $tw(G)+1$ cops in the game,
so for every $t\in V(T)$, there are at least $|W_t|$ cops in the game.
The cops start at $W_{t_1}$ for some arbitrary $t_1\in V(T)$.
Assume that the robber starts at $r_0$, and let $t$ be such that $r_0 \in W_{t}$.
Let $t_2$ be the neighbour of $t_1$  in the unique $(t_1,t)$-path in $T$.
Let $T_1$ and $T_2$ be the components of $T-t_1t_2$,
with $t_1 \in T_1$ and $t_2 \in T_2$.
Define $X = W_{t_1} \cap W_{t_2}$, $U_1 = \cup_{t\in T_1} W_t$, and $U_2 = \cup_{t\in T_2} W_t$.
So the cops are all in $U_1$ and the robber is at a vertex in $U_2 \setminus X$.
Note that the number of cops is at least $|W_{t_2}|$.
Now the cops move in order to occupy $W_{t_2}$,
in such a way that the cops in $X$ stay still.
After some rounds, the cops will be located at $W_{t_2}$,
and during those rounds the robber could not escape from $U_2 \setminus X$,
because by part (b) of Proposition~\ref{prop:decompositions},
there is no edge between $U_1 \setminus X$ and $U_2 \setminus X$.
When the cops have established in $W_{t_2}$, the total space available to the robber has been decreased.
Continuing similarly the cops will eventually capture the robber.

%

Next we prove that the upper bound is tight.
Let $m\geq 4$ be a positive integer.
Define graph $G$ 
as follows.
$G$ has a total of $m + 2m\binom{m}{2}$ vertices,
with a certain independent set $\{v_1,\dots,v_m\}$,
such that every two of the $v_i$'s are connected by $m$ disjoint paths of length 3,
and $G$ does not have any other edge.
Thus $G$ has a total of $3m\binom{m}{2}$ edges.
We show that there exists a tree decomposition of $G$ with width $\max\{m-1, 3\}$.
Let $T$ be the star with $1 + m\binom{m}{2}$ vertices, and let $r$ be its dominating vertex.
Define $W_r = \{v_1,\dots,v_m\}$.
To each path $v_i u_1 u_2 v_j$ assign a leaf $l$ of the tree and set $W_l = \{v_i, u_1, u_2, v_j\}$.
It is easy to verify that $(T,W)$ is a tree decomposition of $G$ with width $\max\{m-1, 3\}$.
Note that $m \geq 4$, so $tw(G) \leq m-1$.

Now we show that $c_{\infty}(G) \geq m$, which completes the proof.
It suffices to show that $m-1$ cops cannot capture a robber with unbounded speed.
Say a cop \emph{controls} a vertex $u$ if the cop is at $u$ or at an adjacent vertex.
If there are $m-1$ cops in the game, we show that the robber can play such that at the end of each round,
if the cops are in $C \subseteq V(G)$, then the robber is at a vertex $r\in \{v_1,\dots,v_m\} \setminus \overline{N}(C)$.
The robber can choose such a vertex at the beginning,
because the distance between any two of the $v_i$'s is 3, so
each cop can control at most one of the $v_i$'s.
Assume that at the end of round $i$ the cops are in $C_i$ and the robber is at $r_i\in \{v_1,\dots,v_m\} \setminus \overline{N}(C_i)$.
In round $i+1$, first the cops move to $C_{i+1} \subseteq \overline{N}(C_i)$. So the robber is not captured.
There exists a vertex $r_{i+1}\in \{v_1,\dots,v_m\} \setminus \overline{N}(C_{i+1})$,
because every cop controls at most one of the $v_i$'s.
If $r_{i+1}=r_i$ then the robber does not move at all.
Otherwise, there are $m$ disjoint $(r_i,r_{i+1})$-paths in $G$, and $m-1$ cops, so at least one of these paths is cop-free,
and the robber moves along that path to $r_{i+1}$.
\end{proof}


\section{Planar Graphs}
\label{chp:planar}

In one of the first papers
on the original Cops and Robber game,
Aigner and Fromme~\cite{speed_one_planar}
proved that three cops can capture the robber in any planar graph.
In this section, we show that if $G$ is planar then $c_{\infty}(G) = \Theta(tw(G))$.
This proves that every planar graph $G$ has $c_{\infty}(G) = O(\sqrt n)$,
and also gives an $O(1)$-approximation algorithm for finding the cop number of a planar graph.
These results hold also when $G$ does not contain any fixed apex graph as a minor.

An \emph{apex graph} is a graph $H$ that has a vertex $v$ such that $H-v$ is planar.
For example, $K_5$ is an apex graph.
The following theorem was proved in a weaker form by
Demaine, Fomin, Hajiaghayi, and Thilikos~\cite{bidimensionality_1},
and then in its current form by
Demaine and Hajiaghayi~\cite{bidimensionality}.

\begin{theorem}[\cite{bidimensionality_1, bidimensionality}]
\label{useful}
Let $H$ be a fixed apex graph.
There is a constant $C_H$ such that the following holds.
Let $g:\mathbb{N}\rightarrow\mathbb{N}$ be a strictly increasing function,
and $P(G)$ be a graph parameter with the following two properties.
\begin{enumerate}
\item
If $G$ is the $r\times r$ grid augmented with additional edges such that each vertex
is incident to $C_H$ edges to non-boundary vertices of the grid, then $P(G) \geq g(r)$.
\item
$P(G)$ does not increase by contracting an edge of $G$.
\end{enumerate}
Then, for any graph $G$ that does not contain $H$ as a minor,
the treewidth of $G$ is $O\left(g^{-1}(P(G))\right)$.
\end{theorem}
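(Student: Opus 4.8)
The plan is to reduce Theorem~\ref{useful} to a structural statement about apex-minor-free graphs --- a \emph{contraction} version of the excluded-grid theorem --- and then to run a short, purely combinatorial chaining argument using the two hypotheses on $P$. Throughout, write $\Gamma_r$ for the $r\times r$ grid augmented so that each vertex is incident to at most $C_H$ additional edges joining it to non-boundary vertices; this is exactly the graph appearing in hypothesis~1.

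First I would invoke the key ingredient: for a fixed apex graph $H$ there is a constant $c_H$ such that every $H$-minor-free graph $G$ with $tw(G)\geq c_H\, r$ admits $\Gamma_r$ as a \emph{contraction} (and not merely as an arbitrary minor). This linear local-grid theorem for apex-minor-free graphs is the heart of the matter, and it is where the apex hypothesis is actually used. I expect it to be the main obstacle: for a general $H$-minor-free graph one only obtains a grid-like structure as a minor, and a minor interacts badly with a parameter that is assumed monotone only under edge contraction. Upgrading ``minor'' to ``contraction'' is precisely what the apex condition buys, and this is the deep structural input.

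Granting this ingredient, the rest is immediate. Set $r=\lfloor tw(G)/c_H\rfloor$, so that $tw(G)\geq c_H r$ and the structural statement yields a sequence of edge contractions turning $G$ into $\Gamma_r$. Applying hypothesis~2 once per contraction gives $P(\Gamma_r)\leq P(G)$, while hypothesis~1 gives $g(r)\leq P(\Gamma_r)$. Chaining these yields $g(r)\leq P(G)$, and since $g$ is strictly increasing this rearranges to $r\leq g^{-1}(P(G))$. Therefore $tw(G) < c_H(r+1)\leq c_H\big(g^{-1}(P(G))+1\big)=O\big(g^{-1}(P(G))\big)$, which is the claimed bound.

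Two points I would verify carefully. The direction of the monotonicity in hypothesis~2 matters: because $G$ contracts \emph{to} $\Gamma_r$, contraction-monotonicity gives $P(\Gamma_r)\leq P(G)$ rather than the reverse, which is exactly what the argument needs. I would also confirm that only contraction-closedness (and not deletion-closedness) is invoked, so that extracting the augmented grid as a contraction --- rather than as a general minor --- is genuinely the correct structural object and is compatible with the stated hypotheses on $P$.
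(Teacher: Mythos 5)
The paper does not actually prove Theorem~\ref{useful}; it is quoted as a black box from \cite{bidimensionality_1, bidimensionality}, so there is no in-paper argument to compare against. Your proposal reconstructs precisely the proof used in those references: the deep ingredient is the contraction-grid theorem for apex-minor-free graphs (every $H$-minor-free graph with $tw(G)\geq c_H r$, $H$ apex, contracts to a partially triangulated $r\times r$ grid with at most $C_H$ extra edges per vertex to non-boundary vertices), and the rest is the chaining $P(\Gamma_r)\leq P(G)$ from contraction-monotonicity together with $g(r)\leq P(\Gamma_r)$ from hypothesis~1. Your chaining is correct, the monotonicity is applied in the right direction, and you correctly identify both why the apex condition is essential (a mere grid \emph{minor}, which is all one gets for general $H$-minor-free graphs, is useless for a parameter that is only contraction-monotone) and why the target must be an \emph{augmented} grid rather than a clean one (contractions cannot delete the leftover edges). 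So your argument is a correct reduction, with the understanding that the structural contraction-grid theorem you invoke is exactly the content of the cited works of Demaine, Fomin, Hajiaghayi, and Thilikos, and of Demaine and Hajiaghayi, and would itself require a substantial proof.
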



\begin{theorem}
\label{thm:planar_treewidth}
Let $H$ be a fixed apex graph.
Any graph $G$ that does not contain $H$ as a minor has $c_{\infty}(G) = \Theta(tw(G))$.
In particular, if $G$ is planar then $c_{\infty}(G) = \Theta(tw(G))$.
\end{theorem}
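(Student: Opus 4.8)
The upper bound $c_{\infty}(G) = O(tw(G))$ needs no work: Theorem~\ref{thm:treewidth_bound} already gives $c_{\infty}(G) \le tw(G)+1$. So the entire content is the matching lower bound $c_{\infty}(G) = \Omega(tw(G))$ --- equivalently $tw(G) = O(c_{\infty}(G))$ --- under the assumption that $G$ excludes a fixed apex graph $H$ as a minor. The plan is to derive this from the structural dichotomy of Theorem~\ref{useful}, applied to the graph parameter $P(G) = c_{\infty}(G)$. To invoke that theorem I must verify its two hypotheses: (1) $c_{\infty}$ is large on augmented grids, and (2) $c_{\infty}$ does not increase under edge contraction.

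For hypothesis (1), fix the constant $C_H$ supplied by Theorem~\ref{useful}, and let $G$ be an $r \times r$ grid in which each vertex receives at most $C_H$ additional edges. The ordinary grid already has treewidth $\Omega(r)$, and since the grid is a subgraph of $G$, we have $tw(G) = \Omega(r)$; meanwhile the maximum degree of $G$ is at most $4 + C_H$, a constant. Feeding these into the lower bound of Theorem~\ref{thm:treewidth_bound} yields
$$c_{\infty}(G) \ge \frac{tw(G)+1}{\Delta(G)+1} \ge \frac{r+1}{C_H+5},$$
so $c_{\infty}(G) = \Omega(r)$. Taking $g$ to be a linear (hence strictly increasing) function lying below this bound verifies hypothesis (1), and because $g$ is linear its inverse $g^{-1}$ is linear as well.

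The real work --- and the step I expect to be the main obstacle --- is hypothesis (2): $c_{\infty}(G/e) \le c_{\infty}(G)$ for every edge $e = uv$, where $G/e$ identifies $u$ and $v$ to a single vertex $w$. I would prove this by a shadow-game simulation in the spirit of Lemma~\ref{lem:helicopter_link}. Starting from a winning strategy for $k = c_{\infty}(G)$ cops in $G$, I run a shadow copy of that game in $G$ alongside the real game in $G/e$: the shadow cops follow the winning $G$-strategy, and the real cops always play the projection of the shadow cops under the map sending $u,v$ to $w$, which one checks turns every legal shadow move into a legal move in $G/e$. When the real robber takes a cop-free path $P$ in $G/e$, I lift it to a cop-free walk in $G$ by copying every vertex other than $w$ verbatim, and rerouting each passage through $w$ via $u$, via $v$, or through both $u$ and $v$ along the edge $e$. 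The crucial point to check is that this lift stays cop-free: if $P$ passes through $w$, then no real cop occupies $w$, which forces both $u$ and $v$ to be free of shadow cops, so the detour along $e$ is always available. Hence every real-robber move lifts to a legal shadow-robber move, and the instant the shadow cops capture the shadow robber in $G$, their projections capture the real robber in $G/e$, giving $c_{\infty}(G/e) \le c_{\infty}(G)$.

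With both hypotheses established, Theorem~\ref{useful} delivers $tw(G) = O\left(g^{-1}(c_{\infty}(G))\right) = O(c_{\infty}(G))$ for every $H$-minor-free $G$, and combining this with $c_{\infty}(G) \le tw(G)+1$ proves $c_{\infty}(G) = \Theta(tw(G))$. The planar case follows by taking $H = K_5$: since $K_5 - v = K_4$ is planar, $K_5$ is an apex graph, and by Wagner's theorem every planar graph excludes $K_5$ as a minor.
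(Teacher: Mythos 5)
Your proposal is correct and follows essentially the same route as the paper: verify the two hypotheses of Theorem~\ref{useful} for $P = c_{\infty}$ (the grid bound via Theorem~\ref{thm:treewidth_bound} with $g(r)=(r+1)/(5+C_H)$, plus monotonicity under edge contraction), then combine with $c_{\infty}(G)\le tw(G)+1$ and take $H=K_5$ for the planar case. The only difference is that you spell out the contraction-monotonicity step with an explicit projection/lifting simulation, which the paper dismisses as ``not difficult to show''; your argument for that step is sound and is a welcome elaboration rather than a departure.
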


\begin{proof}
We show that the parameter $c_{\infty}(G)$  satisfies the two properties given in Theorem~\ref{useful}, with $g(r) = (r+1)/(5+C_H)$.
First, an augmented $r\times r$ grid has treewidth $r$ and maximum degree at most $4+C_H$,
so by Theorem~\ref{thm:treewidth_bound} its cop number is at least $(r+1) / (5+C_H)$.

Second, we need to show that the cop number does not increase by contracting an edge.
It is not difficult to show that contracting an edge does not help the robber,
since she has unbounded speed, and it does not hurt the cops.
Therefore, contracting an edge does not increase the cop number.

Therefore, by  Theorem~\ref{useful}, if $G$ does not contain $H$ as a minor,
then $tw(G) = O(c_{\infty}(G))$.
By Theorem~\ref{thm:treewidth_bound}, $c_{\infty}(G) \leq tw(G)+1$, so we have
$c_{\infty}(G) = \Theta(tw(G))$.
To get the second statement, note that a planar graph does not contain $K_5$ as a minor.
\end{proof}

\begin{corollary}
Let $H$ be a fixed apex graph.
Any graph $G$ that does not contain $H$ as a minor has $c_{\infty}(G) = O(\sqrt n)$,
and this bound is tight.
In particular, any planar graph $G$ has $c_{\infty}(G) = O(\sqrt n)$, and this bound is tight.
\end{corollary}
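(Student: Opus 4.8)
The plan is to derive both directions directly from the treewidth characterization already in hand. For the upper bound I would combine the inequality $c_{\infty}(G) \leq tw(G)+1$ from Theorem~\ref{thm:treewidth_bound} with the classical fact that graphs excluding a fixed minor have treewidth $O(\sqrt n)$. Concretely, since $G$ excludes the fixed apex graph $H$ as a minor, it in particular excludes $K_t$ as a minor for $t = |V(H)|$; by the balanced separator theorem of Alon, Seymour and Thomas, such a graph admits a balanced separator of size $O(t^{3/2}\sqrt n)$. Recursively separating and assembling the resulting pieces into a tree decomposition yields $tw(G) = O(\sqrt n)$, with a hidden constant depending only on $H$. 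For the special case of planar graphs one may instead invoke the Lipton--Tarjan planar separator theorem, which gives the same conclusion more directly. In either case $c_{\infty}(G) \leq tw(G)+1 = O(\sqrt n)$.

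For tightness I would exhibit a planar family on which $c_{\infty}$ is $\Omega(\sqrt n)$. The natural candidate is the $r \times r$ grid $G_r$, which is planar, has $n = r^2$ vertices, maximum degree $\Delta(G_r) = 4$, and treewidth $tw(G_r) = r$. Substituting these into the lower bound of Theorem~\ref{thm:treewidth_bound} gives
$$ c_{\infty}(G_r) \geq \frac{tw(G_r)+1}{\Delta(G_r)+1} = \frac{r+1}{5} = \Omega(r) = \Omega(\sqrt n). $$
Since the grid is planar it contains no $K_5$ minor, and more generally contains no nonplanar apex graph as a minor, so this same family witnesses tightness in the apex-minor-free setting in the case of interest (for instance $H = K_5$), and in particular for the planar case named in the corollary.

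The main obstacle is the treewidth bound in the general apex-minor-free (equivalently, $H$-minor-free) case: whereas the planar $O(\sqrt n)$ treewidth bound is standard and essentially elementary, the $H$-minor-free version relies on the nontrivial separator theorem of Alon, Seymour and Thomas together with the routine translation of balanced separators into a tree decomposition of width $O(\sqrt n)$. Once that bound is granted, everything else is a direct substitution into the two inequalities of Theorem~\ref{thm:treewidth_bound}.
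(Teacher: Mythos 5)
Your proposal is correct and follows essentially the same route as the paper: the upper bound comes from $c_{\infty}(G) \leq tw(G)+1$ (Theorem~\ref{thm:treewidth_bound}) together with the known $O(\sqrt n)$ treewidth bound for $H$-minor-free graphs (which the paper simply cites, where you sketch its separator-based derivation), and tightness comes from plugging the $r\times r$ grid, with treewidth $r$ and maximum degree $4$, into the lower bound of the same theorem. Your added remark that the grid family only witnesses tightness when $H$ is nonplanar (e.g.\ $H=K_5$) is a sound clarification of a point the paper glosses over.
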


\begin{proof}
It is known (see, e.g., \cite{minor_free_treewidth}) that if $G$ does not have $H$ as a minor, then $tw(G) = O(\sqrt n)$.
The $m\times m$ grid has $m^2$ vertices and by Theorem~\ref{thm:treewidth_bound}, its cop number
is at least $(m+1)/5$.
Hence the bound is tight.
\end{proof}

\begin{corollary}
Let $H$ be a fixed apex graph.
There is a constant-factor approximation algorithm for computing the cop number of a graph
that does not contain $H$ as a minor.
In particular, There is a constant-factor approximation algorithm for computing the cop number of a planar graph.
\end{corollary}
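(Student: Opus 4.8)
The plan is to reduce the problem of approximating $c_{\infty}(G)$ to that of approximating the treewidth $tw(G)$, exploiting the tight relationship established in Theorem~\ref{thm:planar_treewidth}.

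First I would record the explicit consequences of Theorems~\ref{thm:treewidth_bound} and~\ref{thm:planar_treewidth} for the $H$-minor-free class. Theorem~\ref{thm:treewidth_bound} gives the unconditional bound $c_{\infty}(G) \le tw(G)+1$, while Theorem~\ref{thm:planar_treewidth} supplies a constant $\gamma = \gamma(H)$ with $tw(G) \le \gamma\, c_{\infty}(G)$. Combining these (in the only interesting case $tw(G)\ge 1$, so that $tw(G)+1 \le 2\,tw(G)$) yields
$$\frac{c_{\infty}(G)}{2} \le tw(G) \le \gamma\, c_{\infty}(G).$$
Thus, for graphs with no $H$-minor, the treewidth is already a constant-factor estimate of $c_{\infty}(G)$, and it remains only to compute $tw(G)$ approximately.

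Second, I would invoke a polynomial-time constant-factor approximation algorithm for treewidth on graphs excluding a fixed minor; such algorithms are known for any fixed excluded minor, and for the planar special case one can even proceed via branchwidth, which is computable exactly in polynomial time and differs from treewidth by at most a factor of $3/2$. Let $\tilde{t}$ denote the output, so that $tw(G) \le \tilde{t} \le \beta\, tw(G)$ for a constant $\beta = \beta(H)$. I would then simply return $\tilde{t}$ as the estimate for $c_{\infty}(G)$ and verify the guarantee by chaining the displayed inequalities:
$$\frac{c_{\infty}(G)}{2} \le tw(G) \le \tilde{t} \le \beta\, tw(G) \le \beta\gamma\, c_{\infty}(G),$$
so that $\tilde{t}$ approximates $c_{\infty}(G)$ within the constant factor $\max\{2,\beta\gamma\}$, which depends only on the fixed graph $H$. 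The planar statement then follows at once, since a planar graph excludes the apex graph $K_5$ as a minor.

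The main obstacle here is not conceptual but one of careful bookkeeping: the crux is citing the appropriate polynomial-time constant-factor treewidth approximation for the minor-closed class and tracking how the two $H$-dependent constants compose — namely the constant $\gamma$ coming from the $c_{\infty}(G) = \Theta(tw(G))$ relation of Theorem~\ref{thm:planar_treewidth} and the constant $\beta$ coming from the treewidth approximation. All the genuine difficulty has already been absorbed into Theorem~\ref{thm:planar_treewidth}, which ties $c_{\infty}$ to a well-understood and efficiently approximable parameter; the remainder is constant-chasing.
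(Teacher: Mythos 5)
Your proposal is correct and follows essentially the same route as the paper: the paper's proof consists precisely of citing the constant-factor treewidth approximation of Feige, Hajiaghayi, and Lee for $H$-minor-free graphs and combining it implicitly with the relation $c_{\infty}(G) = \Theta(tw(G))$ from Theorem~\ref{thm:planar_treewidth}. Your explicit constant-chasing (and the optional planar detour through branchwidth) only spells out what the paper leaves implicit.
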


\begin{proof}
Feige, Hajiaghayi, and Lee~\cite{approximation_treewidth_minor_free}
have developed an $O(1)$-approximation algorithm for finding the treewidth of a graph that does not contain $H$ as a minor.
\end{proof}


\section{Cartesian Products of Complete Graphs, and Hypercube Graphs}
\label{chp:hypercube}
Let $G_1,G_2,\dots,G_m$ be graphs.
Define $G$ to be the graph with vertex set $V(G_1)\times V(G_2)\times \dots \times V(G_m)$
with vertices $(u_1,u_2,\dots,u_m)$ and $(v_1,v_2,\dots,v_m)$ being adjacent if there exists an index $1\leq j \leq m$ such that
\begin{itemize}
\item $u_i = v_i$ for all $i\neq j$, and
\item $u_j$ and $v_j$ are adjacent in $G_j$.
\end{itemize}
Then $G$ is called the \emph{Cartesian product} of $G_1,G_2,\dots,G_m$.
If every $G_i$ is isomorphic to an edge, then the graph $G$ is called the \emph{$m$-hypercube} graph and denoted by $\mathcal {H}_m$.
In this section we give bounds for the Cartesian product of complete graphs with the same size,
and tighter bounds for hypercube graphs.
Neufeld and Nowakowski~\cite{speed_one_products} have studied the original Cops and Robber game
played on products of graphs.
They have determined exactly the number of cops needed to capture the robber,
when $G$ is the Cartesian product of complete graphs with not necessarily the same size,
and when $G$ is the Cartesian product of an arbitrary number of trees and cycles.

First, we prove an easy lemma, which gives a weak upper bound for the cop number
of the Cartesian product of graphs.

\begin{lemma}
\label{lem:bounds_products}
Let $G_1,G_2,\dots,G_m$ be graphs and let $n_i$ denote the number of vertices of $G_i$ for $1\leq i\leq m$.
Let $G$ be the Cartesian product of $G_1,G_2,\dots,G_m$, and $n = |V(G)| = n_1 n_2 \dots n_m$. Then we have
$$ c_{\infty}(G) \leq \frac{nc_{\infty}(G_1)}{n_1}.$$
\end{lemma}

\begin{proof}
We give a strategy for $nc_{\infty}(G_1) / n_1$ cops to capture the robber in $G$.
Let $k = c_{\infty}(G_1)$.
By definition, there is a winning strategy for $k$ cops when the game is played in $G_1$.
We consider a \emph{virtual game}, in which $k$ virtual cops are capturing a {virtual robber} in $G_1$.
(Using a virtual game for bounding the cop number is also used in the proof of Lemma~\ref{lem:helicopter_link},
where it has been explained in more detail.)
For every virtual cop, we put $n/n_1 = n_2 n_3 \dots n_m$ real cops in the real game,
such that if the virtual cop is in $u_1 \in V(G_1)$, then the real cops occupy $\{u_1\} \times V(G_2) \times \dots \times V(G_m)$.
Also, if the real robber is at $(v_1,\dots,v_m) \in G$, then the virtual robber is at $v_1\in G_1$.
It is not hard to see that the real cops can move in such a way that these constraints hold throughout the games.
Hence, once the virtual robber has been captured, the real robber has also been captured, and the proof is complete.
\end{proof}

\begin{theorem}
\label{thm:bounds_products}
Let $G_1,G_2,\dots,G_m$ be graphs, and
let $G$ be the Cartesian product of $G_1,G_2,\dots,G_m$, and $n = |V(G)|$.
Then we have
\begin{itemize}
\item[(a)]
There exist positive constants $\kappa_1,\kappa_2$
such that
if every $G_i$ is isomorphic to the complete graph on $k$ vertices, then
$$\frac{\kappa_1 n}{km\sqrt m } \leq c_{\infty}(G) \leq \min\left\{ \frac{n}{k},\frac{\kappa_2 n}{\sqrt m}\right\}.$$
\item[(b)]
If every $G_i$ is isomorphic to an edge, i.e.~if $G$ is the $m$-hypercube $\mathcal{H}_m$,
then there exist constants $\eta_1,\eta_2>0$ such that
$$\frac{\eta_1 n}{m\sqrt m} \leq c_{\infty}(G) \leq \frac{\eta_2 n}{m}.$$
\end{itemize}
\end{theorem}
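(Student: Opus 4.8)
The plan is to reduce both parts to two facts about the Hamming graph $G$ (the Cartesian product of $m$ copies of $K_k$, on $n=k^m$ vertices): that its maximum degree is $m(k-1)$, and that its treewidth is $\Theta(n/\sqrt m)$. Once these are available, part (a) is immediate from Theorem~\ref{thm:treewidth_bound}. Since $\Delta(G)+1=m(k-1)+1=\Theta(mk)$, the lower bound of that theorem gives
$$c_{\infty}(G)\ \ge\ \frac{tw(G)+1}{\Delta(G)+1}\ =\ \Omega\!\left(\frac{n/\sqrt m}{mk}\right)\ =\ \Omega\!\left(\frac{n}{km\sqrt m}\right),$$
matching $\kappa_1 n/(km\sqrt m)$, while its upper bound gives $c_{\infty}(G)\le tw(G)+1=O(n/\sqrt m)$, matching $\kappa_2 n/\sqrt m$. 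The other upper bound $n/k$ follows at once from Lemma~\ref{lem:bounds_products}, because $c_{\infty}(K_k)=1$ (the complete graph has domination number one), so $c_{\infty}(G)\le n\,c_{\infty}(K_k)/k=n/k$. Taking the minimum of the two upper bounds finishes (a).

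The heart of the matter is thus to establish $tw(G)=\Theta(n/\sqrt m)$. For the upper bound I would sweep through the vertices in order of their coordinate sum $\sum_i x_i$: since an edge changes this sum by a nonzero integer of absolute value at most $k-1$, any band of $k-1$ consecutive weight-layers separates the lighter vertices from the heavier ones, and these bands serve as the bags of a path decomposition. A local central limit estimate for a sum of $m$ independent variables uniform on $\{0,\dots,k-1\}$ shows that each weight-layer has size $O(n/(k\sqrt m))$, so each band has size $O(n/\sqrt m)$, which bounds the width. For the lower bound I would use the standard consequence of part~(b) of Proposition~\ref{prop:decompositions} that a graph of treewidth $w$ has a balanced separator of size at most $w+1$ (one of whose removal leaves all components of size at most $n/2$), together with a vertex-isoperimetric inequality for the Hamming graph asserting that every vertex set whose measure is bounded away from $0$ and $1$ has outer vertex boundary $\Omega(n/\sqrt m)$. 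A balanced separator $X$ induces a bipartition of $V(G)\setminus X$ into parts $A,B$ of measure bounded away from $0$ and $1$ with no edges between them, whence $N(A)\setminus A\subseteq X$; the isoperimetric bound then forces $|X|=\Omega(n/\sqrt m)$, so $tw(G)=\Omega(n/\sqrt m)$.

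Part~(b) is the case $k=2$, $n=2^m$, i.e.\ $G=\mathcal H_m$. Its lower bound $\eta_1 n/(m\sqrt m)$ is precisely the specialization of the bound proved in (a). For the sharper upper bound $\eta_2 n/m$ I would bypass treewidth and instead use the observation that $c_{\infty}(G)$ is at most the domination number $\gamma(G)$: if the cops are placed on a dominating set, then the robber, wherever she positions herself, is adjacent to some cop and is captured on the cops' first move, exactly as in the dominating-set capture used in Lemma~\ref{interval_upperbound}. It then suffices to recall that $\mathcal H_m$ admits a binary covering code of radius one of size $O(2^m/m)$, so that $\gamma(\mathcal H_m)=O(2^m/m)=O(n/m)$.

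The main obstacle is the pair of treewidth estimates for the Hamming graph, and especially the isoperimetric lower bound on balanced separators. What forces the use of a Harper-type vertex-isoperimetric inequality on $[k]^m$ (rather than a softer concentration bound such as Azuma's inequality) is that I need the single-step outer boundary of a balanced set, not its $t$-step expansion. The accompanying layer-size and band-size computations must be carried out carefully through the local central limit theorem so that the factor $k-1$ coming from the band width cancels the factor $1/k$ in the layer size, leaving the clean $n/\sqrt m$ with no spurious dependence on $k$. Everything else is routine bookkeeping with the already-established Theorem~\ref{thm:treewidth_bound} and Lemma~\ref{lem:bounds_products}.
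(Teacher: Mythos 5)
Your reduction is exactly the paper's: both parts rest on Theorem~\ref{thm:treewidth_bound} together with the facts that the Hamming graph has maximum degree $\Theta(mk)$ and treewidth $\Theta(n/\sqrt m)$, plus Lemma~\ref{lem:bounds_products} for the $n/k$ bound in (a) and the domination-number bound $\gamma(\mathcal H_m)=O(2^m/m)$ for the upper bound in (b). Where you diverge is in what gets proved versus cited, and the trade goes in both directions. The paper simply cites Sunil Chandran and Kavitha~\cite{hypercube_treewidth} for $tw(G)=\Theta(n/\sqrt m)$, whereas you sketch a self-contained derivation: the weight-layer path decomposition (with a local-CLT bound of $O(n/(k\sqrt m))$ per layer) for the upper bound, and balanced separators plus a Harper-type vertex-isoperimetric inequality for the lower bound. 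The sketch is sound, and your observation that plain concentration (Azuma) cannot replace genuine vertex isoperimetry is correct, since one needs the one-step boundary of a balanced set. But note that the uniform-in-$k$ isoperimetric inequality for $[k]^m$ you invoke is itself a nontrivial external result (Harper's theorem only covers $k=2$, which suffices for part (b) but not part (a)), so for part (a) you are really trading one black box for another rather than making the argument more self-contained; this is essentially the machinery inside the cited reference anyway. Conversely, for the upper bound in (b) the paper does the opposite of what you do: it proves the domination bound from scratch --- exact domination number $2^m/(m+1)$ when $m=2^k-1$ via Hamming codes, then an induction showing that taking a product with an edge at most doubles the domination number, yielding $\gamma(\mathcal H_m)\leq 2^{m+1}/(m+1)$ --- while you cite the existence of radius-one covering codes of size $O(2^m/m)$, which is the same fact. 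Your justification that the domination number bounds $c_\infty$ (cops on a dominating set capture on their first move) agrees with the paper's usage. So: correct, same skeleton, with the labor redistributed between citation and proof.
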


\begin{proof}

\begin{itemize}
\item[(a)]
Sunil Chandran and Kavitha~\cite{hypercube_treewidth} have proved that
$$tw(G) = \Theta\left(\frac{n}{\sqrt m}\right).$$
As $G$ has maximum degree $O(mk)$,
the lower bound follows from Theorem~\ref{thm:treewidth_bound}.
The upper bound $c_{\infty}(G) = O\left(n /\sqrt m\right)$ follows from the same theorem,
and the bound $c_{\infty}(G) \leq n / k$ follows from Lemma~\ref{lem:bounds_products},
since $G_1$ is a complete graph and has $c_{\infty}(G_1) = 1$.

\item[(b)]
We claim that for any positive $m$, the $m$-hypercube $\mathcal{H}_m$ has  domination number at most $2^{m+1} / (m+1)$.
Indeed, if for some positive integer $k$, $m=2^k-1$, then it is well known that $\mathcal{H}_m$ has  domination number exactly $2^m / (m+1)$ (see~\cite{hypercube_dominating_number} for example).
Otherwise, let $k$ be the largest integer with $2^k - 1 \leq m$. Thus $m < 2^{k+1}-1$.
It is easy to see that for every graph $G$ with domination number $r$, the Cartesian product of $G$ and an edge has domination number at most $2r$.
Hence one can prove using induction that for $i\geq 2^k-1$,
the domination number of $\mathcal{H}_i$ is at most
$$\frac{2^{2^k - 1}}{2^k}2 ^{i - (2^k - 1)} = 2^{i-k}.$$
In particular, the domination number of $\mathcal{H}_m$ is at most $2^{m - k} < \frac{2^{m+1}}{m+1}$.

The upper bound follows from the above claim (recall that $n=2^m)$,
and the fact that the domination number is always an upper bound for the cop number.

Sunil Chandran and Kavitha~\cite{hypercube_treewidth} have proved that $tw(\mathcal{H}_m )= \Theta(2^m / \sqrt m)$.
Since $\mathcal{H}_m$ has maximum degree $m$, the lower bound follows from Theorem~\ref{thm:treewidth_bound}.\qedhere
\end{itemize}

\end{proof}


\section{Existence of Graphs with Linear Cop Number}
\label{chp:large}
Theorem~\ref{thm:treewidth_bound} is especially useful for giving lower bounds for the cop number,
when the graph has small maximum degree. To illustrate this, we use it to give a short proof for
the fact that for each $n$, there exists a connected graph on $n$ vertices with cop number $\Theta(n)$,
which is proved by Frieze~at~al.~\cite{variations} using other ideas.

\begin{theorem}
For each $n$, there exists a connected graph on $n$ vertices with cop number $\Theta(n)$.
\end{theorem}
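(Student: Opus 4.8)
The plan is to exhibit, for each $n$, a connected $n$-vertex graph $G$ that simultaneously has treewidth $\Omega(n)$ and bounded maximum degree, and then to read off the conclusion from Theorem~\ref{thm:treewidth_bound}. This is precisely the situation advertised at the start of the section: when $\Delta(G) = O(1)$, the lower bound $c_{\infty}(G) \geq (tw(G)+1)/(\Delta(G)+1)$ becomes $c_{\infty}(G) = \Omega(tw(G))$, so linear treewidth forces linear cop number. The matching upper bound $c_{\infty}(G) = O(n)$ is immediate, since $n$ cops (one per vertex) trivially win; alternatively $c_{\infty}(G) \leq tw(G)+1 \leq n$ by the same theorem. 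Thus all the content is in producing a bounded-degree graph of linear treewidth.

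For the construction I would take $G$ to be a bounded-degree expander, concretely a cubic (3-regular) graph with constant edge-expansion, which exists on all sufficiently large even $n$ (for instance, a random cubic graph is such an expander with high probability). The key fact I need is that such expanders have treewidth $\Omega(n)$. I would justify this through separators: a graph of treewidth $w$ admits a balanced separator of size at most $w+1$, whereas in a bounded-degree constant-expansion graph every balanced separator has size $\Omega(n)$. Indeed, if a vertex set $S$ splits $G$ into balanced parts, then the smaller part $A$ has $|A| = \Omega(n)$ and, by expansion, edge boundary $\Omega(|A|)$; every such edge must land in $S$, and since degrees are bounded this forces $|S| = \Omega(|A|) = \Omega(n)$. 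Combining the two estimates gives $w+1 = \Omega(n)$, i.e.\ $tw(G) = \Omega(n)$.

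With such a $G$ in hand the argument closes quickly: $\Delta(G) = 3$ gives $\Delta(G)+1 = 4$, so Theorem~\ref{thm:treewidth_bound} yields
$$c_{\infty}(G) \geq \frac{tw(G)+1}{4} = \Omega(n),$$
and together with the trivial $O(n)$ upper bound this gives $c_{\infty}(G) = \Theta(n)$. To cover the values of $n$ for which a connected cubic expander is not directly available (odd $n$, or small $n$), I would pad an expander on $n - O(1)$ vertices by attaching a short pendant path; this preserves connectedness and the bounded-degree property and changes the treewidth by at most an additive constant, so the estimate is unaffected.

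The main obstacle is the treewidth lower bound for expanders, that is, establishing that bounded-degree constant-expansion graphs have no small balanced separators and hence treewidth $\Omega(n)$. This is standard but is the only nontrivial ingredient; everything else is either the cited Theorem~\ref{thm:treewidth_bound} or routine bookkeeping. An alternative that sidesteps expanders entirely would be to invoke directly a known family of bounded-degree graphs of treewidth $\Omega(n)$, but the expander route is the most self-contained and is what I would present.
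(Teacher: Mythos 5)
Your proposal is correct, and it shares the paper's overall framework---both arguments reduce the theorem to exhibiting a connected bounded-degree graph of treewidth $\Omega(n)$ and then invoke the lower bound $c_{\infty}(G) \geq (tw(G)+1)/(\Delta(G)+1)$ of Theorem~\ref{thm:treewidth_bound}---but the witness family is produced by a genuinely different route. The paper takes an Erd\H{o}s--R\'enyi random graph with $n$ vertices and $2n$ edges, cites Kloks' result that such a graph has treewidth greater than $\beta n$ with high probability, and then \emph{prunes} it: by Markov's inequality at most $n\beta/2$ vertices have degree exceeding $16/\beta$, and deleting them costs at most $1$ in treewidth per deleted vertex, leaving a graph with $\Delta = O(1)$ and $tw = \Omega(n)$. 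You instead take a cubic expander and prove the treewidth bound directly via separators: treewidth $w$ yields a balanced separator of size $w+1$, while constant edge-expansion plus bounded degree forces every balanced separator to have size $\Omega(n)$ (all edges leaving the smaller side must end in the separator, and each separator vertex absorbs at most $3$ of them). Your route is more self-contained in that it replaces the citation to Kloks by a standard, short separator argument, and it avoids the probabilistic degree-pruning step entirely; its cost is that you must still appeal to the existence of constant-expansion cubic graphs (itself typically a probabilistic fact, unless you cite explicit constructions) and to the treewidth-to-separator lemma, which the paper never states. Both proofs handle arbitrary $n$ by minor padding, and both obtain the matching $O(n)$ upper bound trivially, so the two arguments are of comparable length and rigor; yours trades one black box (linear treewidth of sparse random graphs) for another, arguably more classical one (expanders have no sublinear balanced separators).
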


\begin{proof}
Let $G$ be an {E}rd\"{o}s-R\'{e}nyi random graph with $n$ vertices and $2n$ edges.
Kloks~\cite{random_graphs_treewidth} has proved that there is a positive constant $\beta$ such that
we have $tw(G) > \beta n$ with probability approaching one,
as $n$ goes to infinity.

Each vertex of $G$ has average degree $2|E(G)|/|V(G)| = 4$.
Hence by Markov's inequality, the probability that a fixed vertex has degree larger than $16/\beta$ is less than $\beta/4$.
By linearity of expectation, the expected number of vertices of degree larger than $16/\beta$ is less than $n\beta / 4$.
Therefore by Markov's inequality, with probability at least $1/2$, $G$ has at most $n \beta / 2$ vertices of degree larger than $16/\beta$.

Consequently, for $n$ large enough, there exists a graph $G_n$ such that
\begin{itemize}
\item $tw(G_n) > \beta n$, and
\item $G_n$ has at most $n \beta / 2$ vertices of degree larger than $16/\beta$.
\end{itemize}
Let $H_n$ denote the graph obtained from $G_n$ by deleting all vertices of degree larger than $16/\beta$.
Deleting each vertex does not decrease treewidth by more than 1.
Thus we have $$ |V(H_n)| \leq n, \qquad tw(H_n) \geq n \beta / 2,\ \mathrm{and} \qquad \Delta(H_n) \leq 16/\beta.$$
By Theorem~\ref{thm:treewidth_bound},
$$c_{\infty}(H_n) \geq \frac{tw(H_n) + 1}{\Delta(H_n)+1} \geq \frac{tw(H_n)}{2\Delta(H_n)},$$ and so
$$ \frac {|V(H_n)|}{c_{\infty}(H_n)} \leq \frac{2 |V(H_n)| \Delta(H_n)}{tw(H_n)} \leq 2 n \times \frac{16}{\beta} \times \frac{2}{n\beta} =64/\beta^2 = O(1),$$
completing the proof.
\end{proof}


\section{Open Problems}
\label{chp:future}

In this section we present a few open questions and research directions on this variant of the Cops and Robber game.

\begin{enumerate}
%
\item
Fomin~et~al.~\cite{fast_robber_first} asked about the complexity of computing $c_{\infty}(G)$ when $G$ is an interval graph.
We proved that this problem is 3-approximable (see Theorem~\ref{thm:intervals_approximation}), but it is still not known if it is NP-hard or not.

\item
We proved that there exist chordal graphs $G$ with $c_{\infty}(G) = \Omega(n / \log n)$ (see Theorem~\ref{chordal_lower}).
Are there  chordal graphs $G$ with $c_{\infty}(G) = \Omega(n)$ ?

\item
Let $H$ be a fixed apex graph.
In Theorem~\ref{thm:planar_treewidth} we proved that if $G$ does not have $H$ as a minor,
then $c_{\infty}(G) = \Theta(tw(G))$.
Is this result true when $H$ is a general graph?

\item
In part (b) of Theorem~\ref{thm:bounds_products}, we have determined the cop number of
the $m$-hypercube graph up to an $O(\sqrt m)$ factor.
What is the exact value?

\item
Fomin~et~al.~\cite{fast_robber_first} proved that computing $c_{\infty}(G)$ is NP-hard.
Is this problem in NP?
To show that this problem is in NP, one needs to prove that there is always an efficient
way to describe the cops' strategy. This has been done for the Helicopter Cops and Robber game~\cite{helicopter}.
\end{enumerate}

\noindent\textbf{Acknowledgement.}
The author is grateful to Nick~Wormald for continuous support and lots of fruitful discussions.

\bibliographystyle{amsplain}
\bibliography{graphsearching}

\end{document}